\newtheorem{theorem}{Theorem}[section]
\newtheorem{lemma}[theorem]{Lemma}
\newtheorem{proposition}[theorem]{Proposition}
\theoremstyle{definition}
\newtheorem{definition}[theorem]{Definition}
\theoremstyle{remark}
\newtheorem{remark}[theorem]{Remark}
\numberwithin{equation}{section}
\begin{document}

\title[A Kuratowski-Type Classification of Critical Complexes for the 3-Sphere]{A Kuratowski-Type Classification of Critical Complexes for the 3-Sphere}

\author{Mario Eudave-Mu\~{n}oz}
\address{Instituto de Matem\'aticas, Universidad Nacional Aut\'onoma de Mexico, Cuernavaca, Mor., MEXICO}
\email{mario@matem.unam.mx}
\thanks{The first author is partially supported by grant PAPIIT-UNAM IN117423}

\author{Makoto Ozawa}
\address{Department of Natural Sciences, Faculty of Arts and Sciences, Komazawa University, 1-23-1 Komazawa, Setagaya-ku, Tokyo, 154-8525, Japan}
\email{w3c@komazawa-u.ac.jp}
\thanks{Corresponding author: Makoto Ozawa.}
\thanks{The second author is partially supported by Grant-in-Aid for Scientific Research (C) (No. 17K05262), The Ministry of Education, Culture, Sports, Science and Technology, Japan}

\subjclass[2020]{Primary 05C10; Secondary 57M15, 57Q35}



\keywords{Kuratowski theorem, critical complex, graph embedding, planar graph,
PL embedding, 3-sphere, multibranched surface}

\begin{abstract}
We give a Kuratowski-type classification of a graph-defined class of minimal
piecewise-linear obstructions to embeddability in the 3-sphere.  A finite
simplicial complex \(X\) is called critical for \(S^3\) if \(|X|\) does not
embed in \(S^3\), whereas deleting the open star of any simplex in the second
barycentric subdivision of \(X\) yields a polyhedron embeddable in \(S^3\).

The main theorem completely classifies critical complexes of the form
\((G\times S^1)\cup H\), where \(G\) and \(H\) are graphs and \(H\) is
attached along vertices of the branch set of \(G\times S^1\).  We prove that
there are exactly seven such complexes up to homeomorphism: two \(K_4\)-type
complexes, four \(\Theta_4\)-type complexes, and one \(K_{2,3}\)-type complex.

The proof is combinatorial in nature.  By collapsing the \(S^1\)-factor of
\(G\times S^1\), we associate to \(X\) a reduction graph
\(\widehat X=G\cup H\).  Criticality implies that \(H\) is a forest, \(G\) is
planar, and \(\widehat X\) is inclusion-minimal non-planar.  Kuratowski's
theorem therefore reduces the classification to the cases \(K_5\) and
\(K_{3,3}\).  A finite analysis of forest attachments, together with a
face-incidence criterion for embeddability, leaves precisely the seven models
listed above.

We also prove that every non-embeddable regular multibranched surface in
\(S^3\) contains a critical subcomplex of the form \(M\cup H\), where \(M\)
is a regular multibranched surface and \(H\) is a graph.
\end{abstract}

\maketitle

\section{Introduction}

Kuratowski's theorem characterizes non-planar graphs by two minimal
obstructions: a graph is non-planar if and only if it contains a subdivision
of \(K_5\) or \(K_{3,3}\) \cite{K}.  Equivalently, \(K_5\) and \(K_{3,3}\)
are the basic minimal obstructions to embeddability in the \(2\)-sphere.
This paper studies an analogous problem for finite polyhedra in the
\(3\)-sphere.

We work in the piecewise-linear category.  Throughout, \(X\) denotes a finite
simplicial complex and \(|X|\) its underlying polyhedron.  We call \(X\)
\emph{critical} for a simplicial complex \(Y\) if \(|X|\) does not embed in
\(|Y|\), whereas for every simplex \(\sigma\) in the second barycentric
subdivision \(X''\), the polyhedron
\(|X''|\setminus st(\sigma)\) embeds in \(|Y|\).  Thus critical complexes are
minimal PL obstructions to embeddability, where the elementary deletion is the
removal of an open star in \(X''\).

Let \(\Gamma(Y)\) denote the set of critical complexes for \(Y\), considered
up to PL homeomorphism.  With this definition, Kuratowski's and Wagner's theorems \cite{K,W} imply that
\[
\Gamma(S^2)=\{K_5,\ K_{3,3},\ S^2\sqcup(\mathrm{pt})\}
\]
see Proposition~\ref{Kuratowski}.  The next natural case is \(Y=S^3\).

The main result of this paper is a complete classification of a natural
graph-theoretic class of critical complexes for \(S^3\).  We consider
complexes of the form
\[
X=(G\times S^1)\cup H,
\]
where \(G\) and \(H\) are graphs and \(H\) is attached along vertices of the
branch set of \(G\times S^1\).  The product part \(G\times S^1\) should be
viewed as a two-dimensional thickening of the graph \(G\), while \(H\) is an
additional one-dimensional graph part.  Collapsing each \(S^1\)-factor gives
the associated reduction graph
\[
\widehat X=G\cup H.
\]

Our classification theorem is the following.

\bigskip
\noindent\textbf{Theorem~\ref{product}.}
\textit{
Let \(X\) be a critical complex for \(S^3\) admitting a decomposition
\[
X=(G\times S^1)\cup H,
\]
where \(G\) and \(H\) are graphs and \(H\) is attached along vertices of the
branch set of \(G\times S^1\).  Then \(|X|\) is homeomorphic to one of the
seven complexes shown in Figure~\ref{seven}.  Conversely, each of the seven
complexes shown in Figure~\ref{seven} is critical for \(S^3\).
}

\begin{figure}[htbp]
	\begin{center}
	\begin{tabular}{cccc}
	\includegraphics[trim=0mm 0mm 0mm 0mm, width=.2\linewidth]{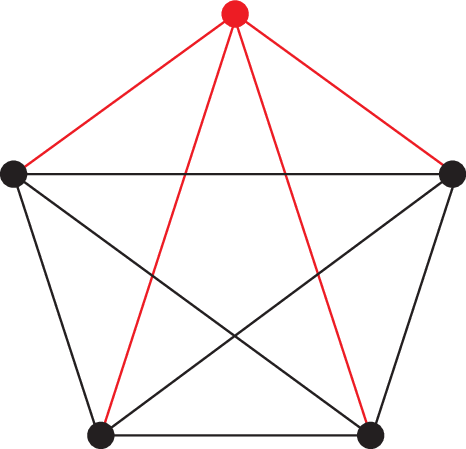}&
	\includegraphics[trim=0mm 0mm 0mm 0mm, width=.2\linewidth]{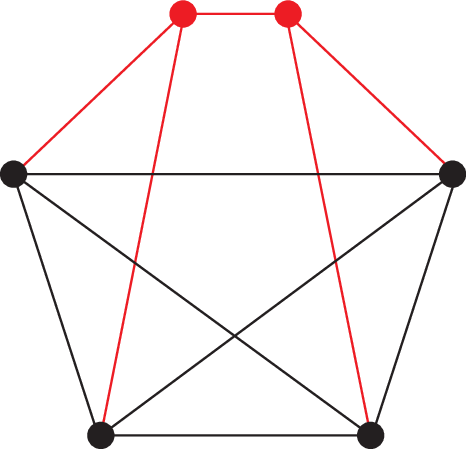}& & \\
    (1) & (2) & & \\
	\includegraphics[trim=0mm 0mm 0mm 0mm, width=.2\linewidth]{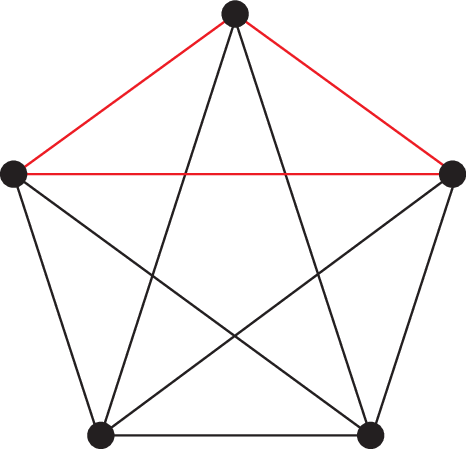}&
	\includegraphics[trim=0mm 0mm 0mm 0mm, width=.2\linewidth]{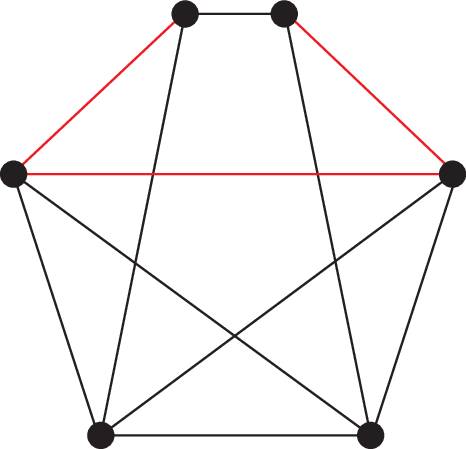}&
	\includegraphics[trim=0mm 0mm 0mm 0mm, width=.2\linewidth]{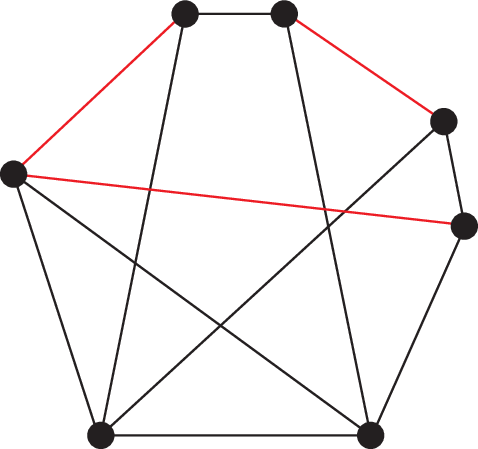}&
	\includegraphics[trim=0mm 0mm 0mm 0mm, width=.2\linewidth]{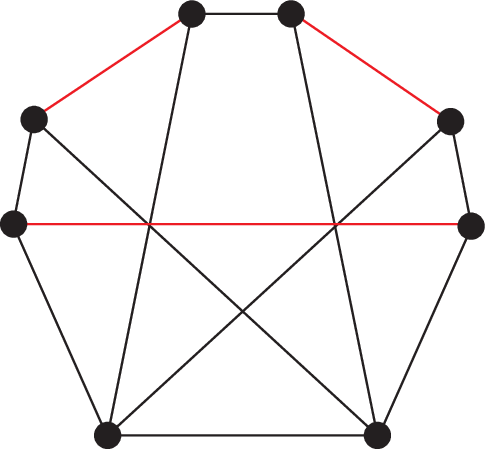}\\
    (3) & (4) & (5) & (6)\\
    \includegraphics[trim=0mm 0mm 0mm 0mm, width=.2\linewidth]{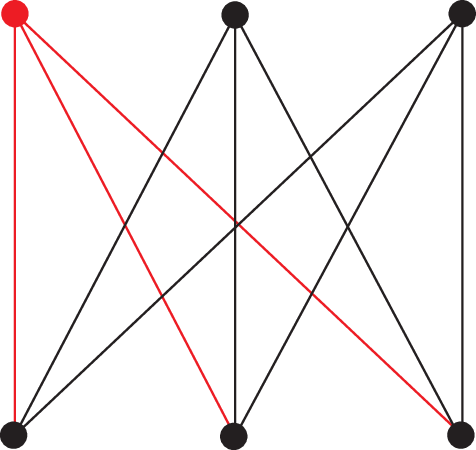}& & & \\
    (7) & & & \\
\end{tabular}
	\end{center}
    \caption{The seven critical complexes of the form \((G\times S^1)\cup H\).
The black graph indicates the reduction of the product part \(G\times S^1\),
and the red graph indicates the graph part \(H\).}
    \label{seven}
\end{figure}

The seven complexes fall into three families.  The first consists of two
\(K_4\)-type complexes.  The second consists of four \(\Theta_4\)-type
complexes.  The third consists of one \(K_{2,3}\)-type complex.  Thus the
classification is finite and explicit, in the same spirit as Kuratowski's
classification of minimal non-planar graphs.

The proof is graph-theoretic in nature.  We first show that if
\(X=(G\times S^1)\cup H\) is critical for \(S^3\), then \(H\) is a forest,
\(\widehat X=G\cup H\) is non-planar, and \(G\) itself is planar.  Hence the
non-embeddability is not caused by the product part alone, but by attaching
the forest \(H\) to the planar graph \(G\).

The second ingredient is a minimality argument.  If a proper subcomplex of
\(X\) is already non-embeddable, then it survives after deleting the open star
of a suitable simplex in \(X''\), contradicting criticality.  This rules out
proper non-embeddable subcomplexes and forces the obstruction to be minimal.

The third ingredient is a finite Kuratowski-type analysis.  Since
\(\widehat X\) is non-planar, its inclusion-minimal non-planar subgraphs are
subdivisions of \(K_5\) or \(K_{3,3}\).  We prove that criticality forces
\(\widehat X\) itself to be inclusion-minimal non-planar.  The remaining
possibilities are then checked by a finite analysis of forest attachments to
\(K_5\) and \(K_{3,3}\).  The embeddability tests are reduced to a planar
face-incidence criterion: each component of the attached forest must have its
attaching vertices on the boundary of a single complementary region.  This
leaves precisely the seven models in Figure~\ref{seven}.

We also prove a general existence result for multibranched surfaces.  Namely,
every regular multibranched surface that is non-embeddable in \(S^3\) contains
a critical subcomplex of the form \(M\cup H\), where \(M\) is a regular
multibranched surface and \(H\) is a graph.  This shows that the class
considered in the main theorem arises naturally in the study of minimal
obstructions to embeddability in \(S^3\).

The paper is organized as follows.
In Section~2 we collect the basic definitions and terminology.
In Section~3 we discuss critical complexes for low-dimensional closed manifolds.
In Section~4 we prove the existence theorem for non-embeddable regular
multibranched surfaces and construct examples coming from
\(K_5\times S^1\) and \(K_{3,3}\times S^1\).
In Section~5 we prove the classification theorem for critical complexes of the
form \((G\times S^1)\cup H\).

\section{Preliminaries}

In this section we collect the basic definitions and terminology used
throughout the paper.  All spaces are considered in the piecewise-linear
category.  A simplicial complex always means a finite simplicial complex, and
\(|X|\) denotes its underlying polyhedron.

\subsection{Critical complexes}

Let \(X\) and \(Y\) be finite simplicial complexes.  We say that \(X\) is
\emph{critical for \(Y\)} if \(|X|\) does not admit an embedding into \(|Y|\),
but for every simplex \(\sigma\) in the second barycentric subdivision \(X''\),
the polyhedron
\[
  |X''|\setminus st(\sigma)
\]
embeds in \(|Y|\).  Here \(st(\sigma)\) denotes the open star of \(\sigma\)
in \(X''\).

Thus a critical complex is a minimal obstruction to embeddability in the
following PL sense: the whole polyhedron does not embed, but every elementary
star-deletion embeds.

Let \(\Gamma(Y)\) denote the set of critical complexes for \(Y\), considered up
to PL homeomorphism.

\begin{remark}\label{rem:criticality}
The use of the second barycentric subdivision is important.  A naive definition
would be to delete a point \(p\in |X|\), but point deletion destroys compactness
and takes us outside the category of polyhedra.  Instead, deleting the open star
of a simplex in \(X''\) gives a polyhedral operation compatible with PL topology.

The second barycentric subdivision is used so that the deleted open star behaves
as a PL regular neighborhood.  Hence the complement
\(|X''|\setminus st(\sigma)\) remains a polyhedron with a well-defined PL
structure.  In this sense, critical complexes are PL analogues of minimal
obstructions, although the notion is not literally a minor-theoretic one.
\end{remark}

\subsection{Two-dimensional and one-dimensional parts}

Let \(X\) be a two-dimensional simplicial complex, and let \(\triangle^i\)
denote the set of \(i\)-simplices of \(X\).  For \(p\in |X|\), let \(N(p;X)\)
denote an open neighborhood of \(p\) in \(|X|\).

The \emph{two-dimensional part} of \(X\) is the union of all two-simplices of
\(X\), denoted by
\[
  X_2=|\triangle^2|.
\]
Define
\[
  S(X)=\{p\in X_2\mid N(p;X)\cong \mathbb{R}^2\},
  \qquad
  B(X)=X_2\setminus S(X).
\]
The connected components of \(S(X)\) are called \emph{sectors}, and the
connected components of \(B(X)\) are called \emph{branches}.  We also set
\[
  \partial X_2
  =
  \{p\in B(X)\mid N(p;X)\cong \mathbb{R}^2_+\}.
\]

The \emph{one-dimensional part} of \(X\) is the underlying polyhedron of the
one-simplices that are not contained in the boundary of any two-simplex:
\[
  X_1=
  \left|
  \{\sigma\in \triangle^1
  \mid
  \sigma\not\subset \partial\tau
  \text{ for every } \tau\in \triangle^2\}
  \right|.
\]
Define
\[
  E(X)=\{p\in X_1\mid N(p;X)\cong \mathbb{R}\},
  \qquad
  V(X)=X_1\setminus E(X).
\]
The connected components of \(E(X)\) are called \emph{edges}, and the connected
components of \(V(X)\) are called \emph{vertices}.

\subsection{Complexes of the form \((G\times S^1)\cup H\)}

Let \(G\) and \(H\) be graphs.  Throughout the paper, the notation \(G\cup H\)
means that \(G\) and \(H\) intersect only in vertices.

The notation
\[
  (G\times S^1)\cup H
\]
denotes a two-dimensional complex whose two-dimensional part is \(G\times S^1\)
and whose one-dimensional part is the graph \(H\).  We assume that
\[
  (G\times S^1)\cap H
\]
is contained in the branch set of \(G\times S^1\).

For such a complex, the \emph{reduction graph} is the graph
\[
  \widehat X=G\cup H
\]
obtained by collapsing each \(S^1\)-factor of \(G\times S^1\) to a point and
leaving \(H\) unchanged.

\subsection{Multibranched surfaces}

A two-dimensional simplicial complex \(X\) is called a
\emph{multibranched surface} if each branch of \(X\) is a simple closed curve
and \(X_1=\emptyset\).

We do not assume that \(\partial X_2=\emptyset\).  Instead, any boundary
component of \(X_2\) is regarded as a branch of degree one, and is therefore
required to be a simple closed curve.

A multibranched surface is called \emph{regular} if every branch has a
well-defined degree and the sector incident to each branch appears with locally
constant wrapping number along that branch.

Multibranched surfaces form a natural class of two-dimensional polyhedra in
embeddability problems in \(S^3\).  They appear naturally in the study of
critical complexes, and they have been studied in several previous works; see
\cite{EMO,MO,O1,O,VS} for background, examples, and embeddability results.

\section{Critical complexes for low-dimensional closed manifolds}

In this section, we determine the critical complexes for several closed PL manifolds of low dimension.

\begin{lemma}\label{dim}
Let $X \in \Gamma(Y)$ for simplicial complexes $X$ and $Y$.
Then $\dim X \le \dim Y$.
\end{lemma}

\begin{proof}
Assume to the contrary that $m:=\dim X > \dim Y=:n$.
Choose an $m$-simplex $\tau$ of $X$, and let $F$ be a facet of $\tau$.
Let $v_F$ be the barycenter of $F$, regarded as a vertex of the second barycentric subdivision $X''$.
Then $\tau''$ contains an $m$-simplex whose interior is disjoint from $st(v_F)$.
Hence $|X''| \setminus st(v_F)$ still contains an $m$-dimensional simplex, and therefore cannot be embedded in the $n$-dimensional polyhedron $|Y|$.
This contradicts the criticality of $X$.
\end{proof}

Let $\bullet$ denote a discrete space consisting of a single point.

\begin{proposition}\label{S^1}
$\Gamma(S^1)=\{S^1 \sqcup \bullet\}$.
\end{proposition}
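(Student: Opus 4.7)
The plan is to show that any connected simplicial complex $X$ that fails to embed in $S^1$ admits at least one point whose removal still obstructs the embedding, so that $X$ cannot be critical, and hence $\Gamma(S^1)=\emptyset$.

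First, by Lemma~\ref{dim} we may restrict to $\dim X\le 1$, so $X$ is a graph. If $\dim X=0$, connectivity forces $X$ to be a single point, which embeds in $S^1$; so assume $\dim X=1$. The key local observation is that every point of $S^1$ has a neighborhood homeomorphic to an open interval. Hence if $X\hookrightarrow S^1$, every point of $X$ inherits such a neighborhood (or a half-interval along a manifold boundary), forcing $X$ to be a connected $1$-manifold with possible boundary, and therefore either a point, an interval, or a circle --- each of which embeds in $S^1$. Contrapositively, if $X$ does not embed in $S^1$, then $X$ must contain some vertex $v\in V(X)$ whose link has at least three points, i.e., a branch vertex of local degree $\ge 3$.

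Since $\dim X=1$, there is a $1$-simplex $\triangle^1$ of $X$; I would choose $p\in \mathrm{int}\,\triangle^1$. Then $p\ne v$ and removing $p$ does not alter any sufficiently small neighborhood of $v$ in $X$, so $X-p$ still contains a branch vertex of degree $\ge 3$. By the same local obstruction applied to $X-p$, we conclude $X-p\not\hookrightarrow S^1$. This exhibits a single point $p\in X$ with $X-p$ not embeddable in $S^1$, contradicting the definition of criticality. The argument is entirely local; the only point needing care is the local obstruction itself --- that a space locally containing a cone on $\ge 3$ points cannot embed in $S^1$, because every open subset of $S^1$ is a disjoint union of open arcs --- and I do not expect any significant obstacle beyond this.
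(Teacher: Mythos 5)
Your proof is correct and follows essentially the same route as the paper: both arguments reduce to $\dim X=1$, observe that non-embeddability in $S^1$ forces $X$ to contain a triod (the paper's $I^{\perp}$), and then delete a manifold point away from the branch vertex so that the triod survives, contradicting criticality. Your write-up is slightly more explicit about why a connected $1$-complex embeddable in $S^1$ must be a point, arc, or circle, and about the $\dim X=0$ case, but the idea is identical.
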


\begin{proof}
Let $X \in \Gamma(S^1)$.
By Lemma~\ref{dim}, we have $\dim X \le 1$, so $X$ is a finite graph.

Suppose first that $X$ is disconnected.
Since every elementary deletion from $X$ is embeddable in $S^1$, each connected component of $X$ must be a compact $1$-dimensional subpolyhedron of $S^1$.
Hence each component is homeomorphic to a point, an arc, or $S^1$.
Because $X$ itself is not embeddable in $S^1$, one component must be homeomorphic to $S^1$.
Every other component must be a single point:
if one of them contained an edge, then deleting a simplex in that component would still leave a nontrivial component disjoint from a circle, which cannot embed in $S^1$;
and if there were at least two extra points, then deleting one of them would still leave $S^1 \sqcup \bullet$, which is not embeddable in $S^1$.
Therefore $X \cong S^1 \sqcup \bullet$.

Next, suppose that $X$ is connected.
Then $X$ is not homeomorphic to $S^1$, since otherwise it would embed in $S^1$.
Hence $X$ contains a triod, that is, a subspace homeomorphic to a $Y$-shaped graph.
Choose a simplex $\sigma$ in one of the terminal edges of this triod.
Then $|X''| \setminus st(\sigma)$ still contains a triod, and therefore cannot be embedded in $S^1$, contradicting the criticality of $X$.
Thus the connected case does not occur.
\end{proof}

\begin{proposition}\label{Kuratowski}
$\Gamma(S^2)=\{K_5, K_{3,3}, S^2 \sqcup \bullet\}$.
\end{proposition}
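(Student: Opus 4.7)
The plan is to prove the equality by two containments. For $\{K_5, K_{3,3}\} \subseteq \Gamma(S^2)$: Kuratowski's theorem \cite{K} ensures that neither graph embeds in $S^2$. For any $p \in K_5$, the complex $K_5 - p$ deformation retracts either to $K_4$ (if $p$ is a vertex) or to $K_5$ minus an edge (if $p$ lies in the interior of an edge); both are planar, since $K_5$ is edge-critical for planarity. The argument for $K_{3,3}$ is identical, so both graphs are critical.

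For the reverse containment, fix $X \in \Gamma(S^2)$. Lemma~\ref{dim} gives $\dim X \le 2$, and discrete complexes embed in $S^2$, so it remains to rule out $\dim X = 2$. Suppose $\dim X = 2$; choose a 2-simplex $\sigma$ of $X$ and $p \in \operatorname{int}\sigma$, and let $f \colon X - p \to S^2$ be the embedding given by criticality. The Jordan curve $f(\partial\sigma)$ bounds two open disks $D_1, D_2 \subset S^2$, and the connected set $f(\operatorname{int}\sigma - p)$ lies in one of them, say $D_1$. The \emph{main obstacle} is to modify $f$ to produce an embedding of the whole $X$, contradicting non-embeddability. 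The easy case is when $f(X - \sigma) \cap D_1 = \emptyset$: then $f(X - \sigma) \subseteq D_2$, and redefining $f|_\sigma$ as any homeomorphism $\sigma \to \overline{D_1}$ that agrees with $f$ on $\partial\sigma$ yields an embedding of $X$, a contradiction; symmetrically if $f(X - \sigma)$ misses $D_2$. Otherwise $f(X - \sigma)$ meets both open disks, so the preimages under $f$ exhibit $X - \sigma$ as the disjoint union of two nonempty open subsets, i.e.\ $X - \sigma$ is disconnected. I plan to handle this remaining case by rerouting one component of $X - \sigma$ across $f(\partial\sigma)$ via an isotopy of $S^2$ supported in a disk (using the Schoenflies theorem), reducing to the easy case and yielding the desired contradiction.

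Given $\dim X = 1$, the complex $X$ is a graph that does not embed in $S^2$. Kuratowski's theorem \cite{K} (equivalently, Wagner's theorem \cite{W}) produces a subspace $K \subseteq X$ homeomorphic to $K_5$ or $K_{3,3}$. If $X \supsetneq K$, then any $p \in X \setminus K$ satisfies $K \subseteq X - p$, so $X - p$ fails to embed in $S^2$, contradicting $X \in \Gamma(S^2)$. Hence $X = K$, and $X$ is homeomorphic to $K_5$ or $K_{3,3}$.
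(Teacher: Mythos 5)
Your overall architecture is the same as the paper's: both inclusions, Lemma~\ref{dim} to bound the dimension, a local disk-replacement argument to exclude $\dim X=2$, and Kuratowski plus minimality in dimension $1$. The first and last paragraphs are essentially fine, with one caveat: ``$K_5-p$ deformation retracts to a planar graph'' is not by itself a valid reason for embeddability (the cone over $K_5$ is contractible yet does not embed in $S^2$); what you actually need, and what is true, is that $K_5-p$ is a planar graph with half-open whiskers attached, and a whisker at a vertex can always be drawn inside an incident face.

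The genuine gap is the unresolved ``hard case'' of the $\dim X=2$ step, and the mechanism you propose for it cannot deliver. An ambient isotopy of $S^2$ that is the identity off a disk missing the rest of $f(X-p)$ must in particular fix the Jordan curve $f(\partial\sigma)$, hence carries each complementary domain $D_1$, $D_2$ to itself; since every component $C$ of $X-\sigma$ has $\overline{C}\cap\partial\sigma\ne\emptyset$ (by connectivity of $X$), so that $f(C)$ accumulates on $f(\partial\sigma)$, no such isotopy can move $f(C)$ from $D_1$ to $D_2$. A genuine re-embedding (a Schoenflies reflection of $\overline{D_1}$ onto $\overline{D_2}$ fixing $f(\partial\sigma)$ pointwise) is what the step requires, and then disjointness from the components already in $D_2$ becomes a real problem: two components whose attaching sets interleave on $\partial\sigma$ can never lie on the same side. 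What saves the argument is that your hard case is in fact vacuous, for a reason you have not used: $\sigma-p\cong S^1\times[0,1)$, so for small $\epsilon$ the set $f(\partial\sigma\times(0,\epsilon))$ is exactly the open annular region between $f(\partial\sigma)$ and the Jordan curve $f(\partial\sigma\times\{\epsilon\})$, i.e.\ $f(\mathrm{int}\,\sigma-p)$ fills an entire one-sided collar of $f(\partial\sigma)$ inside $D_1$. Any component $C$ with $f(C)\subseteq D_1$ would have to enter that collar (it accumulates on $f(\partial\sigma)$) and hence meet $f(\mathrm{int}\,\sigma-p)$, contradicting injectivity. Therefore $f(X-\sigma)\subseteq D_2$ always, you are always in your easy case, and no rerouting is needed. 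Without this observation (or a substitute), the exclusion of $\dim X=2$ is incomplete; with it, your easy case also supplies the justification that the paper itself leaves unproved, namely that $\partial N(p;X)$ can be made to bound a disk in the complement of $X-D$.
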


\begin{proof}
Let $X \in \Gamma(S^2)$.
By Lemma~\ref{dim}, we have $\dim X \le 2$.

Suppose first that $\dim X=1$.
Then $X$ is a non-planar graph.
By Kuratowski's theorem \cite{K}, $X$ contains a subdivision $\Lambda$ of $K_5$
or $K_{3,3}$.
If $\Lambda$ were a proper subgraph of $X$, then, after passing to the second
barycentric subdivision, we could choose a simplex $\sigma$ of $X''$ with
$st(\sigma)\cap |\Lambda|=\emptyset .$
Hence $|X''|\setminus st(\sigma)$ would still contain $\Lambda$, and therefore
would still be non-planar, contradicting the criticality of $X$.
Thus $\Lambda=X$, and so $X$ is homeomorphic to $K_5$ or $K_{3,3}$.

Now suppose that $\dim X=2$.
Let $A$ be a connected component of $X$ containing a $2$-simplex $\tau$, and let $B$ be the union of the remaining connected components.
Let $b$ be the barycenter of $\tau$ in the first barycentric subdivision of $X$.
By criticality, the polyhedron
$X_b:=|X''| \setminus st(b)$
is embeddable in $S^2$.
Set $A_b:=X_b \cap |A|$.
The boundary of the deleted open star determines a simple closed curve
$C=\partial \overline{st(b)} \subset A_b .$

Let $\varphi \colon X_b \hookrightarrow S^2$ be an embedding.
The curve $\varphi(C)$ bounds a disk $D$ in $S^2$.
Since $C$ is the boundary of the deleted $2$-cell, the only way that
$\varphi(A_b)$ could have no complementary region other than $D$ would be for
$\varphi(A_b)$ itself to be a disk.
Thus, if $A_b$ were not homeomorphic to a disk, then
$S^2 \setminus \varphi(A_b)$ would contain a complementary region different from $D$.
Since $B$ is disjoint from $A_b$, its image $\varphi(B)$ is contained in $S^2 \setminus \varphi(A_b)$. 
By scaling and re-embedding if necessary, we can place $B$ entirely within this other complementary region so that it completely misses $D$.
Then the missing $2$-cell could be restored inside $D$, yielding an embedding of $|X|$ into $S^2$, a contradiction.
Therefore $A_b$ is a disk, and hence $|A|$ is homeomorphic to $S^2$.

Since $X$ itself is not embeddable in $S^2$, the set $B$ is nonempty.
If $B$ contained an edge or at least two points, then deleting a simplex of $B$ would still leave a nonempty component disjoint from $S^2$, so the resulting space would remain non-embeddable in $S^2$.
Thus $B$ is a single point, and therefore
$X \cong S^2 \sqcup \bullet .$
\end{proof}

Let $F_0=S^2$, and for $g>0$ let $F_g$ be the closed orientable surface of genus $g$.
For $g>0$, let $\Omega(F_g)$ denote the set of graphs that are minimal with respect to non-embeddability in $F_g$, where minimality is taken with respect to topological minors.

For background on graph embeddings on surfaces and topological minors, see \cite{MT}.

\begin{theorem}\label{closed surface}
For $g>0$,
$\Gamma(F_g)=\{F_0,\ldots,F_{g-1},\, F_g \sqcup \bullet\}\cup \Omega(F_g).$
\end{theorem}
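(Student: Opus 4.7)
The plan is to establish both inclusions. For the $\supseteq$ direction I would verify directly that every $G\in\Omega(F_g)$ is critical (for $1$-complexes, the defining property of $\Omega(F_g)$ is exactly criticality), and that each $F_k$ with $0\le k<g$ is critical since $F_k\ne F_g$ does not embed in the closed surface $F_g$, while the punctured surface $F_k-p$ embeds in $F_g$ (say, as the complement of a punctured $F_{g-k}$).

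For the reverse inclusion, let $X\in\Gamma(F_g)$. Lemma \ref{dim} gives $\dim X\le 2$. If $\dim X=1$ then $X$ is a graph that does not embed in $F_g$, so $X\in\Omega(F_g)$ by definition. So the substantive work lies in the case $\dim X=2$, where the target is to prove $X=F_h$ for some $0\le h<g$.

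The heart of the argument, and the main obstacle, is showing that $X$ must be a closed $2$-manifold. The plan is to prove a single ``bad link persists'' observation: if some $p\in X$ has $\mathrm{lk}(p;X)$ not a single circle or a single arc (this one case handles branch edges through $p$, surfaces wedged at $p$, $1$-strata dangling from $X_2$ at $p$, and every other local non-manifold defect), then for any $q\in X$ outside the open star of $p$ one has $\mathrm{lk}(p;X-q)=\mathrm{lk}(p;X)$, so $X-q$ still contains a non-manifold point and cannot embed in any surface, contradicting criticality. Applying this uniformly rules out all local pathologies and forces $X$ to be a compact $2$-manifold, possibly with boundary.

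Once that is in place, the remainder is a routine surface-topology cleanup. If $\partial X\ne\emptyset$ and $X$ has genus $h$, then for any interior point $p$ the subspace $X\setminus N(p;X)\subset X-p$ is a compact subsurface of genus $h$: either $h\le g$, in which case $X$ itself embeds in $F_g$ (contradicting $X\in\Gamma(F_g)$), or $h>g$, in which case $X\setminus N(p;X)$ and hence $X-p$ fails to embed (contradicting criticality). So $\partial X=\emptyset$. Non-orientability is ruled out by the same pattern, since a closed non-orientable surface and its punctures all fail to embed in the orientable $F_g$. Thus $X=F_h$; finally $h=g$ is excluded since $F_g$ embeds in itself, and $h>g$ is excluded since $F_h-p$ contains a compact subsurface of genus $h>g$ which does not embed in $F_g$, giving $0\le h<g$ as desired.
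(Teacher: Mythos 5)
Your overall skeleton matches the intended result and your endgame (ruling out $\partial X\ne\emptyset$, non-orientability, $h=g$ and $h>g$) is fine, but the step you yourself call the heart of the argument has a genuine gap. The implication ``$\mathrm{lk}(p;X)$ is not a single circle or a single arc $\Rightarrow$ $X-q$ cannot embed in any surface'' is false. A point whose link is a disjoint union of arcs and isolated points --- e.g.\ the free end of a dangling edge (link $=$ one point), a boundary point of a sheet with a whisker attached (link $=$ an arc plus an isolated point), or two sheets meeting only at a common boundary point (link $=$ two disjoint arcs) --- is a non-manifold or boundary point whose cone neighborhood embeds perfectly well in a disk, hence creates no local obstruction. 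For such $p$ the contradiction with criticality you invoke never materializes, so your lemma only excludes links containing a vertex of degree at least $3$, or a circle together with something else; it does not force $X$ to be a compact $2$-manifold. You would need separate arguments to kill the surviving configurations (dangling trees, sheets wedged at boundary points, and so on), for instance the ``re-embed $X$ inside $X-q$'' trick the paper uses later in Lemma~\ref{boundary}.

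The paper sidesteps the local analysis entirely. It picks any $p$ with $N(p;X)\cong\mathbb{R}^2$ (if no such point exists, $X$ is a graph and lies in $\Omega(F_g)$), embeds $X-p$ in $F_g$, and examines the circle $\partial N(p;X)$: if it bounds a disk, refilling it embeds $X$ in $F_g$, a contradiction; otherwise surgery along it places $X$ in a closed surface $F_h$ with $h<g$, and criticality then forces $X=F_h$, since if $X$ were a proper subset of $F_h$ one could connect-sum $F_{g-h}$ into a point of $F_h-X$ and re-embed $X$ in $F_g$. The conclusion that $X$ is a \emph{closed} $2$-manifold thus falls out globally, with no classification of links. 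I would either adopt that route or substantially strengthen your local lemma to cover the benign non-manifold points.
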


\begin{proof}
Let $X \in \Gamma(F_g)$.
By Lemma~\ref{dim}, we have $\dim X \le 2$.

Suppose first that $\dim X=1$.
Then $X$ is a graph.
If $X\notin \Omega(F_g)$, then $X$ contains a proper topological minor
$\Gamma$ that is not embeddable in $F_g$.
Equivalently, $X$ contains a proper subgraph $\Lambda$ homeomorphic to a
subdivision of $\Gamma$.
After passing to the second barycentric subdivision, we can choose a simplex
$\sigma$ of $X''$ such that
$st(\sigma)\cap |\Lambda|=\emptyset .$
Then $|X''|\setminus st(\sigma)$ still contains $\Lambda$, and hence is still
not embeddable in $F_g$, contradicting the criticality of $X$.
Therefore $X\in \Omega(F_g)$.

Now suppose that $\dim X=2$.
Let $A$ be a connected component of $X$ containing a $2$-simplex $\tau$, and let $B$ be the union of the remaining connected components.
Let $b$ be the barycenter of $\tau$ in the first barycentric subdivision of $X$.
By criticality,
$X_b:=|X''| \setminus st(b)$
is embeddable in $F_g$.
Set $A_b:=X_b \cap |A|$, and let
$C=\partial \overline{st(b)} \subset A_b .$

Fix an embedding $\varphi \colon X_b \hookrightarrow F_g$.
There are two cases.

\medskip
\noindent
\textit{Case 1.} $\varphi(C)$ bounds a disk $D$ in $F_g \setminus \varphi(A_b)$.

If $A_b$ were not homeomorphic to $F_g$ minus an open disk, then
$F_g \setminus \varphi(A_b)$ would contain a complementary region distinct from $D$;
indeed, otherwise $\varphi(A_b)$ would already be a once-punctured copy of $F_g$.
Since $B$ is disjoint from $A_b$, its image $\varphi(B)$ is contained in $F_g \setminus \varphi(A_b)$.
By scaling and re-embedding if necessary, we can place $B$ entirely within a complementary region distinct from $D$.
Then the missing $2$-cell could be restored inside $D$, yielding an embedding of $|X|$ into $F_g$, a contradiction.
Hence $|A| \cong F_g$.
Since $X$ is not embeddable in $F_g$, the set $B$ must be nonempty; and by the same argument as above, criticality forces $B$ to be a single point.
Therefore
$X \cong F_g \sqcup \bullet .$

\medskip
\noindent
\textit{Case 2.} $\varphi(C)$ does not bound a disk in $F_g \setminus \varphi(A_b)$.

Let $C'$ be a simple closed curve in $F_g$ parallel to $\varphi(C)$ and disjoint from $\varphi(A_b)$.
Cutting $F_g$ along $C'$ and capping the resulting boundary components with disks produces either a connected closed surface $F_h$ with $h<g$, or a disjoint union $F_h \sqcup F_{g-h}$ with $0 \le h < g$.
In either case, the image of $A$ is contained in a closed surface of strictly smaller genus.

If $B \neq \emptyset$, then after placing $B$ away from the image of $A$, and by taking a connected sum with a surface of appropriate genus at a point away from the image of $A$, we can recover an embedding of $|X|$ into $F_g$, contradicting criticality.
Thus $B=\emptyset$, and $X=A$.
Consequently, $|X|$ is homeomorphic to $F_h$ for some $0 \le h < g$.
This proves the theorem.
\end{proof}

Next, we consider critical complexes for a general closed PL manifold.
By Lemma~\ref{dim}, if $X\in \Gamma(M)$ for a closed PL $n$-manifold $M$, then
$\dim X\le n$.
The following theorem describes the extremal case $\dim X=n$.

\begin{theorem}\label{same}
Let $M$ be a connected closed PL $n$-manifold, and let $X\in \Gamma(M)$ be connected.
Then $\dim X=n$ if and only if $|X|$ is a closed PL $n$-manifold and either
\begin{enumerate}
    \item $X\cong S^n$, or
    \item $M\cong X\# M'$ for some closed PL $n$-manifold $M'$ with $M'\not\cong S^n$.
\end{enumerate}
In particular, if $X\not\cong S^n$, then $X$ is homeomorphic to a nontrivial connected summand of $M$.
\end{theorem}

\begin{proof}
The forward implication is immediate, since a closed PL $n$-manifold has
dimension $n$.
We prove the converse.

Assume that $X\in \Gamma(M)$ is connected and $\dim X=n$.
Choose an $n$-simplex $\sigma$ of $X$, and let $b$ be the barycenter of $\sigma$
in the first barycentric subdivision.
Since $X$ is critical, the polyhedron
$P:=|X''|\setminus st(b)$
admits a PL embedding into $M$.
Let
$S=\partial \overline{st(b)}\subset P .$
Then $S$ is an embedded $(n-1)$-sphere in $M$.

We distinguish two cases.

\medskip
\noindent
\textit{Case 1. $S$ separates $M$.}

Let $M_1$ and $M_2$ be the closures of the two components of $M\setminus S$.
Since $P$ is obtained from $|X|$ by deleting an open $n$-ball, the sphere $S$
is the boundary of $P$.
Hence, in a neighborhood of $S$, the image of $P$ lies on only one side of $S$.
Because $P$ is connected, its image is contained in one of the closures of the
two components of $M\setminus S$; denote this side by $M_1$.

If $M_2$ were an $n$-ball, then filling $S$ by this ball would extend the embedding of $P$
to an embedding of $|X|$ into $M$, contradicting $X\in \Gamma(M)$.
Hence $M_2$ is not an $n$-ball.

Let
$\widehat{M}_1:=M_1\cup_S B^n$
be the closed manifold obtained by capping off $\partial M_1=S$.
Then
$M\cong \widehat{M}_1\# \widehat{M}_2,$
where $\widehat{M}_2:=M_2\cup_S B^n$, and $\widehat{M}_2\not\cong S^n$.

We claim that $P=M_1$.
Suppose not.
Choose a point $q\in M_1\setminus P$, and let $B\subset M_1\setminus P$ be a small PL $n$-ball centered at $q$.
Since any two PL $n$-balls in the interior of a connected PL $n$-manifold are ambient isotopic,
the manifold $\widehat{M}_1\setminus \operatorname{int} B$ is PL homeomorphic to $M_1$.
Hence $\widehat{M}_1\setminus \operatorname{int} B$ embeds in $M$.

On the other hand, $|X|$ is obtained from $P$ by attaching one $n$-ball along $S$,
and this attached ball can be taken to be the capping ball in $\widehat{M}_1$.
Since $B$ is disjoint from $P$, it follows that $|X|$ embeds in
$\widehat{M}_1\setminus \operatorname{int} B$, and therefore in $M$.
This is impossible.
Thus $P=M_1$.

Consequently, $|X|$ is homeomorphic to $\widehat{M}_1$, and hence $|X|$ is a closed PL $n$-manifold.
Moreover, either $\widehat{M}_1\cong S^n$, or else $\widehat{M}_1$ is a nontrivial connected summand of $M$.

\medskip
\noindent
\textit{Case 2. $S$ does not separate $M$.}

Choose a simple closed curve $C\subset M$ meeting $S$ transversely in a single point.
Let $N=N(S\cup C)$ be a regular neighborhood of $S\cup C$, and set
$M_1:=\overline{M\setminus N}.$
Then $\partial M_1$ is an $(n-1)$-sphere.

Topologically, $N$ is obtained from $S\times I$ by attaching a $1$-handle
$\alpha\times B^{n-1}$, where $\alpha=C\setminus \operatorname{int}(S\times I)$ is an arc.
The intersection 
$P\cap (\alpha\times B^{n-1})$
consists of finitely many components $\alpha_1\times B^{n-1},\dots,\alpha_k\times B^{n-1}$,
where each $\alpha_i$ is a sub-arc of $\alpha$.
We now modify the embedding of $P$ so as to remove these intersections with the $1$-handle.
For each endpoint $a_i$ of $\alpha_i$, the $(n-2)$-sphere
$\{a_i\}\times \partial B^{n-1}\subset \partial(\alpha_i\times B^{n-1})$
bounds an $(n-1)$-ball in the collar $\partial M_1\times I$.
Hence each piece $\alpha_i\times B^{n-1}$ can be slid into this collar.
Performing this operation for all $i$, we obtain a PL embedding of $P$ into $M_1$.
Equivalently, the original embedding of $P$ can be isotoped off the $1$-handle of $N$.
See Figure~\ref{re-embedding}.

\begin{figure}[htbp]
    \centering
    \includegraphics[width=0.8\textwidth]{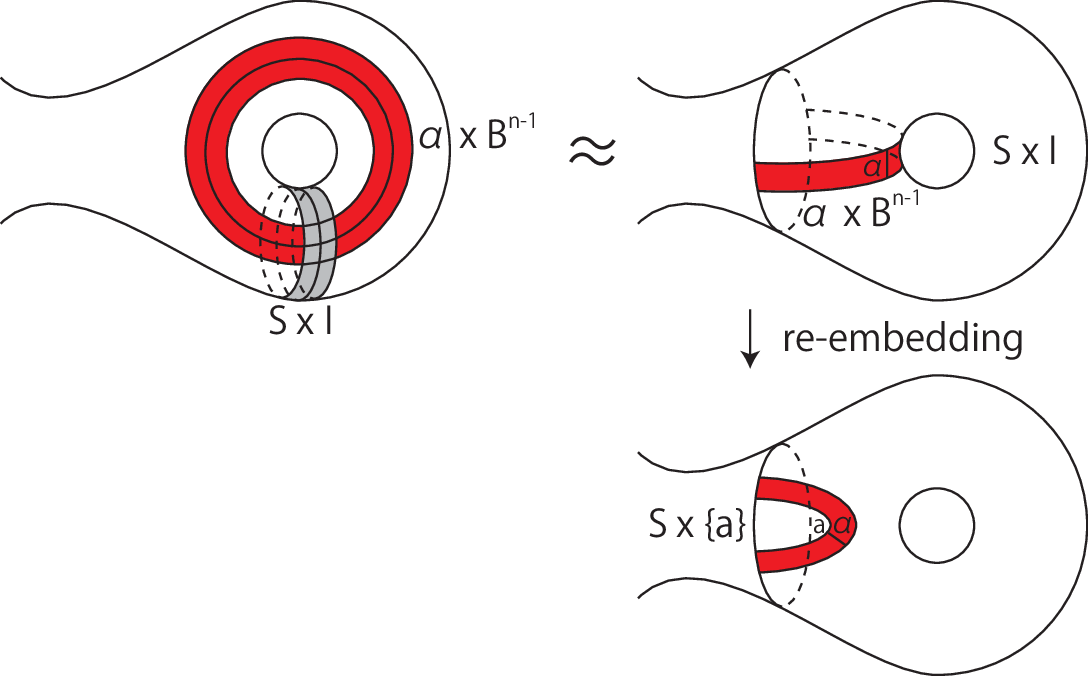}
    \caption{Re-embedding of $|X''|\setminus st(b)$ in $M_1$.}
    \label{re-embedding}
\end{figure}

After this re-embedding, the sphere $S$ becomes parallel to $\partial M_1$.
Therefore, after capping off $\partial M_1$ by an $n$-ball, the sphere $S$ bounds an $n$-ball.
Filling $S$ by this ball recovers an embedding of $|X|$.
Since the capping ball may be realized inside the regular neighborhood $N\subset M$,
this yields an embedding of $|X|$ into $M$, contradicting the criticality of $X$.

Thus Case~2 cannot occur.
The proof is complete.
\end{proof}

\section{Existence of critical subcomplexes in regular multibranched surfaces}

We now restrict attention to regular multibranched surfaces.
Recall that a multibranched surface $X$ is called \emph{regular} if, for each branch
circle $C\subset B(X)$, all boundary components of sectors incident to $C$ map to
$C$ with the same covering degree.
By \cite[Proposition 2.7]{MO}, this condition is equivalent to the existence of an
embedding of $X$ into some closed $3$-manifold.

The next theorem shows that every regular multibranched surface that is
non-embeddable in $S^3$ contains a minimal obstruction whose $2$-dimensional part
is again a regular multibranched surface and whose $1$-dimensional part is a graph.

\begin{theorem}\label{subcomplex}
Let $X$ be a regular multibranched surface.
If $X$ is not embeddable in $S^3$, then $X$ contains a critical subcomplex of the
form $M\cup H$, where $M$ is a regular multibranched surface and $H$ is a graph
(possibly empty).
\end{theorem}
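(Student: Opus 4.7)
The plan is to find the critical subcomplex by a minimization argument. Define $\mathcal{Y}$ to be the collection of subcomplexes of $X$ expressible as $M' \cup H'$, with $M'$ a sub-multibranched-surface of $X$ and $H'$ a subgraph, that fail to embed in $S^3$. Since $X$ itself belongs to $\mathcal{Y}$ (with $H' = \emptyset$) and $X$ is finite, I choose a minimal element $Y = M \cup H \in \mathcal{Y}$ with respect to inclusion. The 2-dimensional part $M$ of $Y$ is a sub-multibranched-surface of $X$ inheriting its branch-circle structure, while $H$ consists of the orphaned branch circles of $X$ (those no longer lying on $\partial M$) together with any 1-cells and vertices of $Y$ not on $\partial M$; this is a graph. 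So $Y$ automatically has the form required by the theorem.

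The remaining task is to verify that $Y$ is critical, namely that $Y \setminus \{p\}$ embeds in $S^3$ for every $p \in Y$. Fix $p \in Y$ and let $\tau$ be the open simplex of $Y$ containing $p$. The subcomplex $Y \setminus \mathrm{int}(\tau)$ is a proper subcomplex of $Y$ and is still of the form $M'' \cup H''$ (with $M''$ obtained from $M$ by deleting $\tau$ when $\dim \tau = 2$, and $H''$ possibly enlarged by newly orphaned circles). By the minimality of $Y$, the subcomplex $Y \setminus \mathrm{int}(\tau)$ embeds in $S^3$.

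I would then extend a chosen embedding of $Y \setminus \mathrm{int}(\tau)$ to an embedding of $Y \setminus \{p\}$ by inserting $\mathrm{int}(\tau) \setminus \{p\}$ into the complement. For $\dim \tau = 0$, the equality $Y \setminus \{p\} = Y \setminus \mathrm{int}(\tau)$ makes this immediate. For $\dim \tau = 1$, two open arcs need to be added in the 3-dimensional complement in $S^3$, which is always possible. For $\dim \tau = 2$, we need to embed a punctured open disk whose boundary matches the embedded $\partial \tau$.

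The main obstacle is the 2-cell case. In $S^3$, unlike in $S^2$, an embedded circle may be knotted and need not bound a disk, so we cannot directly fill in a disk as in the argument of Proposition \ref{Kuratowski}. The flexibility will come from two ingredients: we only need a punctured disk (an open annulus) rather than a full disk; and the multibranched structure provides room in the complement. Near $\partial \tau$ in the embedding of $Y \setminus \mathrm{int}(\tau)$, the remaining sheets divide a tubular neighborhood of $\partial \tau$ into wedge regions, and in an appropriate wedge one can place a collar of $\partial \tau$ that extends outward into the complement. The puncture at $p$ then supplies the flexibility needed to close up this collar to the full open annulus, producing the desired embedding of $Y \setminus \{p\}$ and completing the verification of criticality.
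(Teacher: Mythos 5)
Your overall plan---pass to a minimal non-embeddable subcomplex of the form $M'\cup H'$ and then verify criticality point by point---is a legitimate alternative skeleton to the paper's argument, which instead iterates a single move: if some point $p$ of a sector $S$ can be deleted without restoring embeddability, replace $S$ by a spine of $S$ rel $\partial S$, so that the complex stays inside the class (multibranched surface)$\cup$(graph) at every stage. The trouble with your version is in the verification step, and it is a genuine logical gap: minimality is assumed only over proper subcomplexes \emph{of the form} $M''\cup H''$, but the subcomplexes $Y\setminus\mathrm{int}(\tau)$ you need to control often fall outside that class. If $\tau$ is a $2$-simplex meeting a branch circle $B$ of $M$ along an edge, deleting $\mathrm{int}(\tau)$ lowers the number of sheets along that edge, so $B$ degenerates into a branch \emph{arc} and the result is no longer a multibranched surface union a graph; your minimality hypothesis then says nothing about whether it embeds. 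Worse, when $p$ lies on a branch circle or at a vertex, $\tau$ is not a maximal simplex, so $Y\setminus\mathrm{int}(\tau)$ is not even a subcomplex; this case is not addressed at all, whereas the paper disposes of it by noting that every neighborhood of such a $p$ contains sector points and reducing to the sector case. To repair this you must either enlarge $\mathcal{Y}$ to all subcomplexes (and then separately prove that a minimal element has the form $M\cup H$, which is most of the content of the theorem) or show that non-embeddability of these degenerate subcomplexes always forces a smaller member of $\mathcal{Y}$. A further, smaller issue is that $\mathcal{Y}$ and its minimal elements depend on the chosen triangulation of $X$: graph pieces such as the $K_{1,4}$ in $(K_4\times S^1)\cup K_{1,4}\subset K_5\times S^1$ are subcomplexes only after subdivision, so you should fix a sufficiently fine triangulation or permit subdivision during the minimization.

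By contrast, the step you flag as ``the main obstacle'' is actually the easy part, although your sketch of it is not a proof: ``close up this collar to the full open annulus'' is not an operation, and no filling argument in $S^3$ is needed. For $p\in\mathrm{int}(\tau)$ with $\tau$ a maximal $2$-simplex in the interior of a sector, choose a small closed disk $D$ with $p\in\mathrm{int}(D)\subset\mathrm{int}(\tau)$; since $\tau-p$ and $\tau\setminus D$ are both half-open annuli with compact boundary $\partial\tau$, there is a homeomorphism between them fixing a neighborhood of $\partial\tau$, whence
\[
Y-p\;\cong\;Y\setminus D\;\subset\;Y\setminus\mathrm{int}(D)\;\cong\;Y\setminus\mathrm{int}(\tau),
\]
and the last complex embeds by hypothesis. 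Thus the punctured disk never has to be re-inserted into $S^3$: $Y-p$ is homeomorphic to a subspace of the complex you have already embedded. (This same point-set observation is what underlies the paper's replacement of a punctured sector by a regular neighborhood of a spine.) In short, the geometric step you worried about is harmless, but the combinatorial bookkeeping you treated as automatic is where the proof currently fails.
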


\begin{proof}
Call a subcomplex $Y\subset X$ \emph{admissible} if $|Y|=M\cup H$, where $M$ is a
regular multibranched surface and $H$ is a graph.
Since $X$ itself is admissible and non-embeddable in $S^3$, there exists a
non-embeddable admissible subcomplex.
Among all such subcomplexes, choose one,
$Y=M\cup H \subset X,$
for which the pair
$c(Y):=(s(M),e(H))$
is minimal in the lexicographic order, where $s(M)$ denotes the number of sectors
of $M$ and $e(H)$ the number of edges of $H$.
We claim that $Y$ is critical for $S^3$.

Assume to the contrary that $Y$ is not critical.
Then there exists a simplex $\sigma$ in the second barycentric subdivision $Y''$
such that
$|Y''|\setminus st(\sigma)$
is still non-embeddable in $S^3$.
We consider three cases.

\medskip
\noindent

\textit{Case 1. $st(\sigma)$ is contained in the interior of a sector $S$ of $M$.}

Then $S\setminus st(\sigma)$ is homeomorphic to a regular neighborhood
$N(G\cup \partial S;S)$ of a connected graph $G$ properly embedded in $S$.
Set
$Y_0:=(Y\setminus \operatorname{int} S)\cup G$.

We claim that $Y_0$ is non-embeddable in $S^3$.
Suppose to the contrary that $Y_0$ embeds in $S^3$.
Since $G$ is connected, its image is contained in the closure of a single
complementary region, say $R$, of
$S^3\setminus (Y\setminus \operatorname{int} S)$.
Because $M$ is regular, the boundary components of $S$ meet the incident branch
circles with compatible degree, so a collar neighborhood $N(\partial S;S)$ can be
restored inside $R$ along those branch circles.
Then the band neighborhood $N(G;S)$ can also be realized inside $R$ along the
embedded graph $G$.
Hence
\[
(Y\setminus \operatorname{int} S)\cup N(G\cup \partial S;S)
\cong |Y''|\setminus st(\sigma)
\]
embeds in $S^3$, contradicting the choice of $\sigma$.
Thus $Y_0$ is non-embeddable.

Since $Y_0$ is again admissible and has one fewer sector than $M$, we have
$c(Y_0)<c(Y)$,
contrary to the minimality of $Y$.

\medskip
\noindent
\textit{Case 2. $st(\sigma)$ is contained in the graph part $H$.}

In this case, deleting $st(\sigma)$ simply removes a nonempty subgraph from $H$.
Hence
$Y_0:=M\cup (H\setminus st(\sigma))$
is admissible.
Moreover, $Y_0$ is PL homeomorphic to $|Y''|\setminus st(\sigma)$, and therefore
is non-embeddable in $S^3$.
Moreover, $e(H\setminus st(\sigma))<e(H)$, so again
$c(Y_0)<c(Y),$
a contradiction.

\medskip
\noindent
\textit{Case 3. $st(\sigma)$ is not contained in the interior of any sector of $M$
and is not contained in $H$.}

Then $st(\sigma)$ meets the branch set of $M$ (or the intersection $M\cap H$).
Since $Y$ is $2$-dimensional, $st(\sigma)$ contains a $2$-simplex $\tau$ lying in
the interior of some sector $S$ of $M$.
Because
$|Y''|\setminus st(\sigma)\subset |Y''|\setminus \tau,$
the space $|Y''|\setminus \tau$ is also non-embeddable in $S^3$.
After one further barycentric subdivision, we may choose a simplex $\sigma'$
contained in the interior of $\tau$ such that $st(\sigma')\subset \tau$.
Then
$|Y^{(4)}|\setminus st(\sigma')$
is non-embeddable in $S^3$, and we are reduced to Case~1.
This again contradicts the minimality of $Y$.

All cases lead to contradictions.
Therefore $Y$ is critical for $S^3$.
Since $Y$ is admissible, it has the required form $M\cup H$.
This proves the theorem.
\end{proof}

\subsection{Examples from \(K_5\times S^1\) and \(K_{3,3}\times S^1\)}

Although Theorem~\ref{subcomplex} is not constructive, it suggests natural sources
of critical subcomplexes. In this subsection we introduce two explicit families of
non-embeddable regular multibranched surfaces derived from
$K_5\times S^1$ and $K_{3,3}\times S^1$.

Let $Y_n=K_{1,n}\times S^1$, let $P_n$ denote an $n$-punctured sphere, and let
$D_n$ be a disjoint union of $n$ disks.
Suppose that a multibranched surface $X$ contains a subcomplex $Y_n$.
For integers $i,j\ge 0$ with $i+j=n$, we define a \emph{replacement operation}
by deleting the interiors of the sectors of $Y_n$ together with the branch of order
$n$, and then attaching $\partial P_i$ and $\partial D_j$ to the boundary branches of $Y_n$ by
degree-one maps, one boundary component to each boundary branch.
We assume that the resulting multibranched surface contains no non-orientable
surface as a subcomplex.
This operation preserves the branch degrees.

Applying this operation recursively to $K_5\times S^1$ and $K_{3,3}\times S^1$
produces the following families:
\begin{enumerate}[(1)]
\item $K_5 \times S^1$,
\item $(K_4 \times S^1) \cup P_4$,
\item $(K_4 \times S^1) \cup P_3 \cup D_1$,
\item $(K_4 \times S^1) \cup D_4$,
\item $(K_3 \times S^1) \cup P_3 \cup D_3$,
\item $K_{3,3} \times S^1$,
\item $(K_{2,3} \times S^1) \cup P_3$,
\item $(K_{2,3} \times S^1) \cup D_3$,
\item $(K_{1,3} \times S^1) \cup P_3 \cup D_3$.
\end{enumerate}
We refer to (1)--(5) as the $K_5\times S^1$ family and to (6)--(9) as the
$K_{3,3}\times S^1$ family.

For later use, note that in (5) and (9) the three boundary components of $P_3$
(and also those of $D_3$) are attached to three distinct boundary branches.

\medskip

Let $X$ be a subcomplex of a trivial bundle $F\times S^1$ with projection
$p\colon F\times S^1\to F$.
We say that $X$ is \emph{vertical} if $p^{-1}(p(X))=X$.

\begin{lemma}\label{two_embeddings}
Let $G$ be a connected graph and let
$f\colon G\times S^1 \to S^3$
be an embedding.
Then, after ambient isotopy, one of the following holds:
\begin{enumerate}
\item there exists a knot $K\subset S^3$ and a product structure
      $N(K)\cong D^2\times S^1$ such that $f(G\times S^1)\subset N(K)$ and
      $f(G\times S^1)$ is vertical in $N(K)$;
\item there exists a cable knot $K$ with cabling annulus $A$ such that
      $f(G\times S^1)\subset N(K)\cup N(A)$, the part inside $N(K)$ is vertical,
      and the part inside $N(A)$ is a union of annuli parallel to $A$.
\end{enumerate}
\end{lemma}

\begin{proof}
Let $T$ be a spanning tree of $G$.
Then $N(f(T\times S^1))$ is a solid torus with product structure
$D^2\times S^1$, and $f(T\times S^1)$ is vertical in it.
Let $K=\{0\}\times S^1$ be its core, and write
$E(K)=S^3\setminus \operatorname{int}N(K).$
For each edge $e\in E(G)\setminus E(T)$, the annulus
$A_e=f(e\times S^1)\cap E(K)$
has the property that each boundary component meets a meridian of $N(K)$ exactly
once. By \cite[Lemma 15.26]{B}, each $A_e$ is either boundary-parallel in $E(K)$
or a cabling annulus.
If all such annuli are boundary-parallel, they may be isotoped into $N(K)$, giving
case (1).
Otherwise, one of them is a cabling annulus, and all the others that are not
boundary-parallel are parallel to it; hence we obtain case (2).
\end{proof}

An embedding
$f\colon G\times S^1\to S^3$
is called \emph{standard} if it is of type~(1), the knot $K$ is trivial, and for
some $p\in \partial D^2$, the circle $\{p\}\times S^1$ bounds a meridian disk of
the complementary solid torus.

A \emph{rotation system} for a regular multibranched surface is a cyclic ordering
of the sectors around each branch.
A rotation system on a graph $G$ induces one on $G\times S^1$.

\begin{lemma}\label{standard}
For any connected graph $G$ and embedding
$f\colon G\times S^1\to S^3$,
there exists a standard embedding
$f_0\colon G\times S^1\to S^3$
with the same rotation system as $f$.
\end{lemma}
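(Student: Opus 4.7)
The plan is to invoke Lemma \ref{two_embeddings} to split $f$ into two cases and handle each.

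In Case (1), where $f(G \times S^1)$ is vertical in $N(K) = D^2 \times S^1$ for some knot $K$, the projection $p : D^2 \times S^1 \to D^2$ gives a planar embedding $\iota : G \hookrightarrow D^2$ such that $f(G \times S^1) = \iota(G) \times S^1$. I would define $f_0$ by composing $\iota \times \mathrm{id}_{S^1}$ with the standard inclusion of $D^2 \times S^1$ as the tubular neighborhood of the unknot for which $p \times S^1$ is a longitude (so it bounds a disk in the exterior). This $f_0$ is standard by construction, and at each branch circle $v \times S^1$ the cyclic ordering of sectors is intrinsically the cyclic ordering of edges of $\iota(G)$ around $\iota(v)$ in $D^2$; this ordering depends only on $\iota$ and is therefore identical for $f$ and $f_0$.

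In Case (2), where $f(G \times S^1) \subseteq N(K) \cup N(A)$, the strategy is to show this reduces to Case (1). I would classify each edge $e$ of $G$ according to whether $f(e \times S^1)$ is vertical in $N(K)$ (type $E_K$) or is an annulus parallel to $A$ in $N(A)$ (type $E_A$), and then argue that no edge or vertex is of mixed type. The key point is a slope incompatibility: the boundary of a vertical annulus on $\partial N(K)$ has longitudinal slope, while the boundary of an annulus parallel to $A$ has the cabling slope (nonzero and non-longitudinal). Since these slopes represent different elements of $H_1(\partial N(K);\mathbb{Z})$, sectors of different types cannot share a common boundary circle. Consequently, at every vertex $v$ all incident sectors are of one type, so connectedness of $G$ forces all of $G$ to be one type. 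If everything is of type $E_K$, we are back in Case (1). If everything is of type $E_A$, then disjoint parallel copies of $A$ have pairwise-disjoint boundary circles, so each vertex of $G$ has degree one; hence $G$ is a single edge $I$, and $G \times S^1$ is an annulus whose (trivial) circular permutation system is realized by the obvious standard embedding of $I \times S^1$ in a standard unknot neighborhood.

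The main obstacle is the slope analysis in Case (2); once the slope rigidity of longitudes versus cabling curves on $\partial N(K)$ is established, the rest of the proof is essentially formal, as the standard embedding in Case (1) is just a formal replacement of the ambient solid torus that preserves the planar data $\iota$ from which the circular permutation system is read off.
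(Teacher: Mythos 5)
Your Case (1) is correct and is essentially the paper's argument: re-embed the solid torus $N(K)=D^2\times S^1$ as a product neighborhood of the unknot with the product framing, and observe that the circular permutation system is read off from the induced planar embedding $\iota:G\to D^2$, which is unchanged. The genuine gap is in Case (2). In type (2) of Lemma \ref{two_embeddings} the vertical part and the cabling part are \emph{not} glued along curves of $\partial N(K)$ carrying two competing slopes. An edge annulus $f(e\times S^1)$ that passes through $N(A)$ still has collar pieces inside $N(K)$ joining the branch circles $f(v\times S^1)$ (which lie in the \emph{interior} of $N(K)$) to the boundary curves of its cabling annulus on $\partial N(K)$; moreover the trivialization of $N(K)$ furnished by type (2) is exactly the one whose boundary fibers have the (integral) cabling slope, so these collars are vertical for the same product structure as the purely vertical edges. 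Hence there is no slope obstruction, a single branch circle can carry sectors of both kinds simultaneously (this mixed situation is precisely what type (2) describes), and your dichotomy ``all of $G$ is one type'' is false. The subsequent claim that in the all-$E_A$ case every vertex has degree one, so that $G$ is a single edge, also fails: the pairwise-disjoint boundary circles of the annuli parallel to $A$ live on $\partial N(K)$, not at the branch circles, so they say nothing about the valence of vertices of $G$.

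What Case (2) actually requires, and what your proposal omits, is a re-embedding that eliminates the cabling annuli: cut $\partial N(K)$ along $\partial A$ into two annuli $A^+$ and $A^-$, and replace each annulus parallel to $A$ by a parallel copy of, say, $A^-$ pushed slightly into ${\rm int}\,N(K)$, leaving the image untouched near the branch circles. Every formerly cabling annulus becomes boundary-parallel, so the whole image can be isotoped into $N(K)$, reducing to type (1); and since nothing is altered near the branch circles, the circular permutation system is preserved. Without this step your argument does not produce a standard embedding from a type (2) embedding.
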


\begin{proof}
If $f$ is already of type~(1), we re-embed the ambient solid torus as an unknotted
solid torus in $S^3$ and adjust the product structure by Dehn twists so that one
fiber circle becomes a meridian of the complementary solid torus.
This preserves the cyclic ordering of the annuli around each branch.

If $f$ is of type~(2), let $A$ be the cabling annulus.
All annuli of $f(G\times S^1)\cap N(A)$ that are parallel to $A$ are mutually
disjoint, and they lie in a product neighborhood $A\times[-1,1]\subset N(A)$.
Hence we may isotope them simultaneously onto pairwise disjoint level annuli
$A\times\{t_1\},\dots,A\times\{t_m\}$.
For each $k$, let $B_k\subset N(K)$ be a boundary-parallel annulus whose boundary
coincides with that of $A\times\{t_k\}$.
Replacing each level annulus $A\times\{t_k\}$ by $B_k$, we obtain an embedding of
type~(1).
Because the replacements are made independently in pairwise disjoint regions and fix
the boundary circles, they do not change the cyclic ordering of the annuli around
any branch circle.
Therefore the rotation system is unchanged.
\end{proof}

\begin{lemma}\label{planar}
Let $G$ be a connected graph and let
$f\colon G\times S^1\to S^3$
be an embedding.
Then there exists an embedding
$f_0\colon G\to S^2$
with the same rotation system as $f$.
Moreover, the complementary regions of
$S^3\setminus f(G\times S^1)$
correspond bijectively to the complementary regions of
$S^2\setminus f_0(G)$.
\end{lemma}

\begin{proof}
By Lemma~\ref{standard}, we may assume that $f$ is standard.
Hence $f(G\times S^1)$ is vertical in an unknotted solid torus $D^2\times S^1$.
Projecting to $D^2$ yields an embedding of $G$ into $D^2$, hence into $S^2$.
Since the cyclic orderings around the branches determine the adjacency pattern of
the complementary regions, the regions of the two complements correspond naturally.
\end{proof}

\begin{proposition}\label{families_nonembeddable}
None of the members of the $K_5\times S^1$ family and the $K_{3,3}\times S^1$
family embeds in $S^3$.

Moreover, they contain the following subcomplexes:
\begin{enumerate}
\item $K_5 \times S^1 \supset (K_4 \times S^1) \cup K_{1,4}$,
\item $(K_4 \times S^1) \cup P_4 \supset (K_4 \times S^1) \cup K_{1,4}$,
\item $(K_4 \times S^1) \cup P_3 \cup D_1$ (which is critical itself),
\item $(K_4 \times S^1) \cup D_4 \supset (K_{1,3} \times S^1) \cup D_4 \cup K_3$,
\item $(K_3 \times S^1) \cup P_3 \cup D_3$ (which is critical itself),
\item $K_{3,3} \times S^1 \supset (K_{2,3} \times S^1) \cup K_{1,3}$,
\item $(K_{2,3} \times S^1) \cup P_3 \supset (K_{2,3} \times S^1) \cup K_{1,3}$,
\item $(K_{2,3} \times S^1) \cup D_3 \supset (K_{1,3} \times S^1) \cup D_3 \cup K_{1,3}$,
\item $(K_{1,3} \times S^1) \cup P_3 \cup D_3 \supset (K_{1,3} \times S^1) \cup D_3 \cup K_{1,3}$.
\end{enumerate}
\end{proposition}

\begin{proof}
We first prove non-embeddability.

For (1) and (6), if $G\times S^1$ were embeddable in $S^3$, then by
Lemma~\ref{planar} the graph $G$ would embed in $S^2$.
Since $K_5$ and $K_{3,3}$ are non-planar, neither
$K_5\times S^1$ nor $K_{3,3}\times S^1$ embeds in $S^3$.

For (2) and (7), the complement of $K_4\times S^1$ (respectively,
$K_{2,3}\times S^1$) has the same region-incidence pattern as the complement of
a planar embedding of $K_4$ (respectively, $K_{2,3}$) in $S^2$, by
Lemma~\ref{planar}.
Thus each complementary region is incident only to the corresponding boundary
branches.
A sector $P_4$ in case (2), or $P_3$ in case (7), would have to lie entirely in a
single complementary region, but no such region is incident to the required set of
boundary branches.
Hence these complexes are non-embeddable.

For (3), (4), and (8), since one of the sectors attached to
$K_4\times S^1$ or $K_{2,3}\times S^1$ is a disk, we may assume by
Lemma~\ref{standard} that the embedding of the product part is standard.
In a standard embedding, exactly one complementary region has the relevant branch
circles as meridians; all remaining regions meet them as longitudes.
Therefore all disk components must lie in that unique meridional region.
This leaves no region in which the remaining punctured sphere or extra disks can be
attached with the required boundary slopes, giving a contradiction.

For (5), let
$X=(K_3\times S^1)\cup P_3\cup D_3.$
Again the product part may be assumed standard.
Its complement consists of two solid tori, say $R_1$ and $R_2$.
The three disks of $D_3$ must lie in the meridional side, say $R_2$, and therefore
$P_3$ lies in $R_1$.
Then
$\Sigma=P_3\cup D_3$
is an embedded $2$-sphere in $S^3$.
Since every embedded $2$-sphere separates $S^3$, each annulus of
$K_3\times S^1$ would have to lie entirely on one side of $\Sigma$.
But each such annulus has one boundary circle on $P_3$ and one on $D_3$, so this
is impossible.
Hence (5) is non-embeddable.

For (9), the complement of $(K_{1,3}\times S^1)\cup D_3$ has the same
region-incidence structure as the complement of a planar embedding of $K_{1,3}$.
Each complementary region is incident to at most two boundary branches.
Therefore no region can contain a copy of $P_3$ whose three boundary components
attach to three distinct boundary branches.
Hence (9) is non-embeddable.

The listed subcomplex containments are immediate from the construction.\end{proof}

The criticality of the relevant complexes will be established later in
Proposition~\ref{three-types-critical}, where the complementary-region structure
and the effect of elementary deletions are analyzed for the three model types.

\section{Classification of critical complexes of the form \((G\times S^1)\cup H\)}

In this section we classify all critical complexes for $S^3$ whose
$2$-dimensional part is a product $G\times S^1$ and whose $1$-dimensional part is
a graph $H$.

\subsection{Reduction graphs and basic restrictions}

Let
$X=(G\times S^1)\cup H,$
where $G$ and $H$ are graphs and $(G\times S^1)\cap H$ consists only of vertices
in the branch set of $G\times S^1$.
We define the \emph{reduction} of $X$ to be the graph
$\widehat X:=G\cup H,$
obtained by collapsing each circle factor of $G\times S^1$ to a point.
More precisely, regard $S^1$ as $[0,1]/(0\sim 1)$, let
$\pi\colon G\times S^1\to G\times\{0\}\cong G$
be the projection, and extend it by the identity on $H$.
Since $(G\times S^1)\cap H$ consists only of branch vertices, this gives a well-defined map
$f\colon (G\times S^1)\cup H \to G\cup H,$
whose image is $\widehat X$.

In the drawings below, and in the description of the model types later in this
section, an attaching
point which is drawn in the interior of a sector is understood in the following
PL sense: we first subdivide the corresponding edge of the graph $G$ at the
projection of that point and then regard the circle fibre over the new vertex as
a branch circle. Thus the standing convention that the graph part is attached
along vertices of the branch set is preserved up to PL homeomorphism.

The first observation is that the graph part contains no cycle.

\begin{lemma}[$H$ is a forest]\label{forest}
If $X=(G\times S^1)\cup H$ is critical for $S^3$, then $H$ is a forest.
\end{lemma}

\begin{proof}
Assume that some connected component $H'\subset H$ contains a cycle, and choose an
edge $e$ on that cycle.
By the criticality of $X$, the space
$X_e:=X\setminus \operatorname{int} e$
embeds in $S^3$.
Fix such an embedding
$\varphi\colon |X_e|\hookrightarrow S^3.$

Let $N$ be a closed regular neighborhood of $\varphi(G\times S^1)$ in $S^3$.
Since $H$ meets $G\times S^1$ only at vertices in the branch set, after a small
ambient isotopy supported in a neighborhood of $N$ and fixing the attaching
vertices, we may assume that
$\varphi(H'\setminus \operatorname{int} e)\cap \operatorname{int} N=\emptyset .$
Thus $\varphi(H'\setminus \operatorname{int} e)$ is contained in
$S^3\setminus \operatorname{int} N$ and meets $\partial N$ only at the attaching
vertices.

Because $e$ lies on a cycle of $H'$, the graph $H'\setminus \operatorname{int} e$
is still connected.
We claim that its image is contained in the closure of a single connected component
of $S^3\setminus \varphi(G\times S^1)$.
Indeed, if it meets two distinct complementary regions, then a path in
$\varphi(H'\setminus \operatorname{int} e)$ joining points in those two regions
would have to pass from one complementary region to another through
$\varphi(G\times S^1)$.
This is impossible, because
$\varphi(H'\setminus \operatorname{int} e)$ is disjoint from
$\varphi(G\times S^1)$ except at attaching vertices, and near each attaching
vertex the graph locally leaves the branch set through a single side of the
surface part.
Hence there exists a complementary region
$R\subset S^3\setminus \varphi(G\times S^1)$
whose closure contains $\varphi(H'\setminus \operatorname{int} e)$.

The two endpoints of $\varphi(e)$ are attaching vertices, hence they lie in
$\overline{R}\cap \varphi(G\times S^1)$.
Choose points $p,q\in R$ sufficiently close to these two endpoints, respectively,
and join each endpoint to the corresponding point by a short arc in $\overline{R}$
whose interior is contained in $R$.
Since $R$ is connected and
$\varphi(H'\setminus \operatorname{int} e)\cap R$
is $1$-dimensional, general position yields an arc
$\beta\subset R$
joining $p$ and $q$ such that
$\beta\cap \varphi(H'\setminus \operatorname{int} e)=\emptyset $.
Concatenating these three arcs, we obtain an arc
$\alpha\subset \overline{R}$
joining the two endpoints of $\varphi(e)$ such that
$\operatorname{int}\alpha\subset R$
 and 
$\operatorname{int}\alpha\cap \varphi(X_e)=\emptyset $.
Adding $\alpha$ to $\varphi(X_e)$ recovers an embedding of $|X|$ into $S^3$,
contradicting the criticality of $X$.
Therefore $H$ contains no cycle, and hence $H$ is a forest.
\end{proof}

The next lemma explains why the reduction graph must be non-planar.

\begin{lemma}[$\widehat X = G\cup H$ is non-planar]\label{lem:reduction-nonplanar}
If
$X=(G\times S^1)\cup H$
is critical for $S^3$, then its reduction graph
$\widehat X = G\cup H$
is non-planar.
\end{lemma}

\begin{proof}
By Lemma~\ref{forest}, $H$ is a forest.

Assume to the contrary that $\widehat X=G\cup H$ is planar.
Then $\widehat X$ embeds in a disk $D^2\subset S^2$.
Taking the product with $S^1$, we obtain an embedding
\[
G\times S^1 \hookrightarrow D^2\times S^1 \subset S^3.
\]

Because $H$ is a forest, each connected component of $H$ can be embedded in the
closure of a complementary region of the planar embedding of $\widehat X$, with
its attaching vertices fixed.
Therefore the embedding of $G\times S^1$ extends to an embedding of
$X=(G\times S^1)\cup H$
into $S^3$, contradicting the assumption that $X$ is critical for $S^3$.

Hence $\widehat X$ is non-planar.
\end{proof}

\begin{lemma}[Star-deletion minimality lemma]\label{lem:choose-simplex-away}
Let
$X=(G\times S^1)\cup H$
and let
$Y=(G_0\times S^1)\cup H_0\subset X$,
where $G_0\subset G$ and $H_0\subset H$ are subgraphs.

Assume that either $E(G)\setminus E(G_0)\neq\emptyset$ or
$E(H)\setminus E(H_0)\neq\emptyset$.
Then there exists a simplex $\sigma$ of the second barycentric subdivision $X''$
such that
$|Y|\subset |X''|\setminus st(\sigma)$.
In particular, if $|Y|$ does not embed in $S^3$, then
$|X''|\setminus st(\sigma)$ does not embed in $S^3$.
\end{lemma}

\begin{proof}
First suppose that there exists an edge
$e\in E(H)\setminus E(H_0)$.
Choose a simplex $\sigma$ of $X''$ contained in the interior of $e$.
Then $st(\sigma)\subset \operatorname{int} e$, hence $st(\sigma)\cap |Y|=\emptyset$.
Therefore
$|Y|\subset |X''|\setminus st(\sigma)$.

Next suppose that there exists an edge
$e\in E(G)\setminus E(G_0)$.
Then the sector $e\times S^1$ is an annulus contained in the $2$-dimensional part of
$X$ and disjoint from $|G_0\times S^1|$ except possibly along its boundary branch
circles. Choose a $2$-simplex $\tau$ in the interior of $e\times S^1$, and let
$\sigma$ be the barycenter of $\tau$, regarded as a vertex of $X''$.
Then $st(\sigma)$ is contained in the interior of $e\times S^1$, so again
$st(\sigma)\cap |Y|=\emptyset$.
Hence
$|Y|\subset |X''|\setminus st(\sigma)$.

The final assertion is immediate.
\end{proof}

Lemmas~\ref{lem:reduction-nonplanar} and~\ref{lem:choose-simplex-away}
will be used repeatedly below.
In particular, Lemma~\ref{lem:reduction-nonplanar} shows that if
$X=(G\times S^1)\cup H$ is critical for $S^3$, then its reduction graph
$\widehat X$ is non-planar.
We shall use this fact repeatedly below.

\begin{lemma}[minimal non-planar graph is $K_5$ or $K_{3,3}$-type]\label{kuratowski-core}
Let $\Gamma$ be a non-planar graph such that $\Gamma$ has no proper non-planar
subgraph.
Then $\Gamma$ is a subdivision of either $K_5$ or $K_{3,3}$.
\end{lemma}

\begin{proof}
By Kuratowski's theorem, $\Gamma$ contains a subdivision of $K_5$ or $K_{3,3}$.
Such a subdivision is non-planar.
Since $\Gamma$ has no proper non-planar subgraph, it must be all of $\Gamma$.
\end{proof}

\begin{figure}[htbp]
	\begin{center}
	\begin{tabular}{ccc}
	\includegraphics[trim=0mm 0mm 0mm 0mm, width=.22\linewidth]{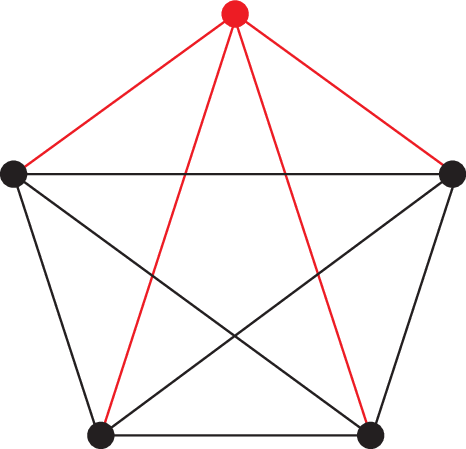}&
	\includegraphics[trim=0mm 0mm 0mm 0mm, width=.22\linewidth]{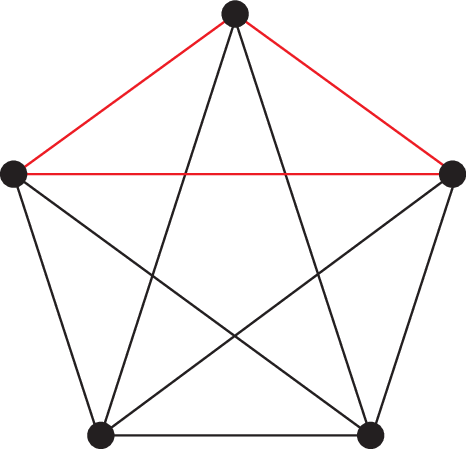}&
	\includegraphics[trim=0mm 0mm 0mm 0mm, width=.22\linewidth]{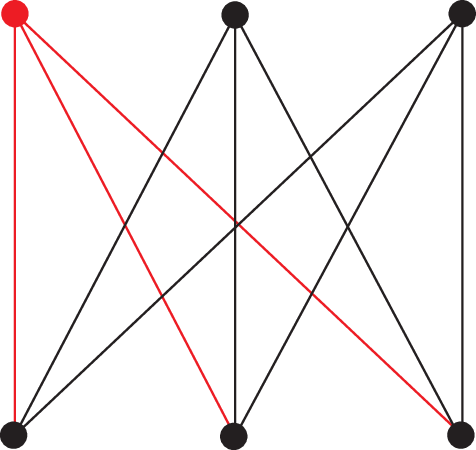}\\
	(i) & (ii) & (iii)
	\end{tabular}
	\end{center}
\caption{Three contracted models of the seven reduction graphs. More precisely,
the reductions corresponding to Figure~\ref{seven}(1),(2) contract to (i),
those corresponding to Figure~\ref{seven}(3)--(6) contract to (ii), and that
corresponding to Figure~\ref{seven}(7) contracts to (iii).}
	\label{3types}
\end{figure}

\begin{lemma}[$G$ is planar]\label{lem:G-planar}
If $X=(G\times S^1)\cup H$ is critical for $S^3$, then $G$ is planar.
\end{lemma}

\begin{proof}
Assume, to the contrary, that $G$ is non-planar.  Choose an
inclusion-minimal non-planar subgraph $G_0\subset G$.  By Kuratowski's
theorem, after suppressing degree-two vertices, $G_0$ is either $K_5$ or
$K_{3,3}$.

Consider the subcomplex
  $X_0:=G_0\times S^1\subset X $.
We first note that $X_0$ is not embeddable in $S^3$.  Indeed, if
$G_0\times S^1$ embedded in $S^3$, then Lemma~\ref{planar} would
imply that $G_0$ embeds in $S^2$, contradicting the non-planarity of
$G_0$.

If either $G_0\subsetneq G$ or $H\ne\emptyset$, then
Lemma~\ref{lem:choose-simplex-away} gives a simplex $\sigma$ of $X''$ such
that
  $|X_0|\subset |X''|\setminus st(\sigma)$.
Thus $|X''|\setminus st(\sigma)$ is non-embeddable in $S^3$, contradicting
criticality.  Hence we may assume
  $X=X_0=G_0\times S^1 $.

It remains to exclude this last possibility.  Suppose first that the
Kuratowski core of $G_0$ is $K_5$.  The product $K_5\times S^1$ contains a
proper subcomplex homeomorphic to
  $(K_4\times S^1)\cup K_{1,4}$,
obtained by deleting one vertex of the $K_5$-core from the product part and
retaining, inside the four incident annuli, four arcs from the deleted
branch circle to the four remaining branch circles.  By
Proposition~\ref{families_nonembeddable}, this subcomplex is
non-embeddable in $S^3$.  If $G_0$ is a subdivision of $K_5$, the same
construction is performed after subdividing the corresponding annuli; the
resulting subcomplex is PL-homeomorphic to the same model.

Similarly, if the Kuratowski core of $G_0$ is $K_{3,3}$, then
$G_0\times S^1$ contains a proper subcomplex homeomorphic to
  $(K_{2,3}\times S^1)\cup K_{1,3}$,
again obtained by replacing the three annuli incident to one deleted vertex
by three arcs.  This subcomplex is non-embeddable by
Proposition~\ref{families_nonembeddable}.

In either case $X$ contains a proper non-embeddable subcomplex $Y$.  After
subdividing $X$ if necessary, we may regard $Y$ as a subcomplex.  Since
$Y\subsetneq X$, there is a simplex $\sigma$ of $X''$ with
$st(\sigma)\cap |Y|=\emptyset$.  Hence
  $|Y|\subset |X''|\setminus st(\sigma)$,
and $|X''|\setminus st(\sigma)$ is non-embeddable in $S^3$, again
contradicting criticality.  Therefore $G$ must be planar.
\end{proof}

\subsection{Region-incidence criteria}

\begin{lemma}[Single-region criterion]\label{lem:single-region-criterion}
Let $P$ be an embedded product part of the form $G_0\times S^1$ in $S^3$, with a
fixed rotation system, and let $T$ be a forest attached to $P$ only at the allowed
attaching vertices. Suppose that every attaching vertex of $T$ is a terminal
vertex of the corresponding component of $T$. Then $P\cup T$ embeds in $S^3$,
extending the given embedding of $P$, if and only if each connected component
$T_i$ of $T$ can be assigned to a complementary region $R_i$ of
$S^3\setminus P$ such that $T_i\cap P\subset \overline{R_i}$.
\end{lemma}

\begin{proof}
If such regions $R_i$ are given, then each tree $T_i$ may be embedded in the
$3$-manifold $\overline{R_i}$ with its terminal vertices fixed on $T_i\cap P$.
Since the components of $T$ are disjoint trees, these embeddings may be chosen
pairwise disjoint by general position. This gives an embedding of $P\cup T$.

Conversely, suppose $P\cup T$ is embedded. For a component $T_i$ of $T$, the
open graph $T_i\setminus (T_i\cap P)$ is connected and is disjoint from $P$.
Hence it is contained in a single complementary region $R_i$ of $S^3\setminus P$.
Taking the closure, all attaching vertices of $T_i$ lie in $\overline{R_i}$.
\end{proof}

\begin{lemma}[Region-incidence criterion after puncturing]\label{lem:region-incidence-puncture}
Let $P=G_0\times S^1$ be a product part embedded in $S^3$, and fix its rotation
system. Let $\Gamma\subset S^2$ be the planar graph with the same rotation system,
as obtained from Lemma~\ref{planar}. Let $T$ be a forest attached to $P$ at terminal
vertices of $T$.
Then $P\cup T$ embeds in $S^3$, extending the embedding of $P$, if and only if for
each component $T_i$ of $T$, the set of attaching vertices of $T_i$ is contained in
the boundary of one complementary region of $S^2\setminus \Gamma$.
Moreover, if a small open disk is deleted from the sector corresponding to an edge
$e$ of $\Gamma$, then the same criterion holds after replacing the two faces of
$S^2\setminus \Gamma$ adjacent to $e$ by their union.
\end{lemma}

\begin{proof}
The first assertion is Lemma~\ref{lem:single-region-criterion} expressed after
projecting a standard embedding of $G_0\times S^1$ to the disk. The complementary
regions of $S^3\setminus (G_0\times S^1)$ correspond bijectively to the complementary
regions of $S^2\setminus \Gamma$ by Lemma~\ref{planar}. A tree component of $T$ can be
placed in a complementary region precisely when all of its attaching vertices lie
on the boundary of that region.

For the second assertion, deleting a small disk from the sector corresponding to
$e$ creates a passage between the two complementary regions adjacent to that
sector. In the projected planar picture this is exactly the operation of merging
the two faces adjacent to $e$. The same single-region criterion then applies to the
modified list of complementary regions.
\end{proof}

\subsection{Finite Kuratowski-type checks}

We use the notation of Figure~\ref{fig:nine-forests}: 
$L_m$ denotes the path on $m$ vertices, and $T_5$ denotes the tree on
five vertices with degree sequence $(3,2,1,1,1)$ which is not $L_5$.

\begin{figure}[htbp]
\centering
\begin{tikzpicture}[
    scale=1,
    vertex/.style={circle, fill=black, inner sep=1.4pt},
    lbl/.style={font=\small}
]


\begin{scope}[shift={(0,0)}]
    \node[vertex] (a) at (0,0) {};
    \node[vertex] (b) at (1.2,0) {};
    \draw (a)--(b);
    \node[lbl] at (0.6,-0.7) {$K_2$};
\end{scope}

\begin{scope}[shift={(3.2,0)}]
    \node[vertex] (a) at (0,0) {};
    \node[vertex] (b) at (0.8,0) {};
    \node[vertex] (c) at (1.6,0) {};
    \draw (a)--(b)--(c);
    \node[lbl] at (0.8,-0.7) {$L_3$};
\end{scope}

\begin{scope}[shift={(7.0,0)}]
    \node[vertex] (a1) at (0,0) {};
    \node[vertex] (a2) at (0.9,0) {};
    \node[vertex] (b1) at (2.0,0) {};
    \node[vertex] (b2) at (2.9,0) {};
    \draw (a1)--(a2);
    \draw (b1)--(b2);
    \node[lbl] at (1.45,-0.7) {$2K_2$};
\end{scope}


\begin{scope}[shift={(0,-2.4)}]
    \node[vertex] (c) at (0.8,0) {};
    \node[vertex] (u) at (0.8,0.8) {};
    \node[vertex] (l) at (0,0) {};
    \node[vertex] (r) at (1.6,0) {};
    \draw (c)--(u);
    \draw (c)--(l);
    \draw (c)--(r);
    \node[lbl] at (0.8,-0.8) {$K_{1,3}$};
\end{scope}

\begin{scope}[shift={(3.2,-2.4)}]
    \node[vertex] (a) at (0,0) {};
    \node[vertex] (b) at (0.8,0) {};
    \node[vertex] (c) at (1.6,0) {};
    \node[vertex] (d) at (2.4,0) {};
    \draw (a)--(b)--(c)--(d);
    \node[lbl] at (1.2,-0.7) {$L_4$};
\end{scope}

\begin{scope}[shift={(7.0,-2.4)}]
    \node[vertex] (a) at (0,0) {};
    \node[vertex] (b) at (0.8,0) {};
    \node[vertex] (c) at (1.6,0) {};
    \draw (a)--(b)--(c);

    \node[vertex] (d) at (2.8,0) {};
    \node[vertex] (e) at (3.7,0) {};
    \draw (d)--(e);

    \node[lbl] at (1.85,-0.7) {$L_3\sqcup K_2$};
\end{scope}


\begin{scope}[shift={(0,-4.9)}]
    \node[vertex] (c) at (0.8,0) {};
    \node[vertex] (u) at (0.8,0.9) {};
    \node[vertex] (d) at (0.8,-0.9) {};
    \node[vertex] (l) at (0,0) {};
    \node[vertex] (r) at (1.6,0) {};
    \draw (c)--(u);
    \draw (c)--(d);
    \draw (c)--(l);
    \draw (c)--(r);
    \node[lbl] at (0.8,-1.4) {$K_{1,4}$};
\end{scope}

\begin{scope}[shift={(3.2,-4.9)}]
    \node[vertex] (v1) at (0,0) {};
    \node[vertex] (v2) at (0.8,0) {};
    \node[vertex] (v3) at (1.6,0) {};
    \node[vertex] (v4) at (0.8,0.8) {};
    \node[vertex] (v5) at (1.6,-0.8) {};
    \draw (v1)--(v2)--(v3);
    \draw (v2)--(v4);
    \draw (v3)--(v5);
    \node[lbl] at (0.8,-1.4) {$T_5$};
\end{scope}

\begin{scope}[shift={(7.0,-4.9)}]
    \node[vertex] (a) at (0,0) {};
    \node[vertex] (b) at (0.8,0) {};
    \node[vertex] (c) at (1.6,0) {};
    \node[vertex] (d) at (2.4,0) {};
    \node[vertex] (e) at (3.2,0) {};
    \draw (a)--(b)--(c)--(d)--(e);
    \node[lbl] at (1.6,-0.7) {$L_5$};
\end{scope}

\end{tikzpicture}
\caption{The nine nonempty forest types in $K_5$ up to automorphism.}
\label{fig:nine-forests}
\end{figure}

\begin{lemma}[Finite $K_5$ check]\label{lem:K5-case-check}
Let $\overline H$ be a nonempty forest in $K_5$, and put
$\overline G:=K_5-E(\overline H)$.
Up to automorphism of $K_5$, the following hold.

\begin{enumerate}
\item[(1)]
If $\overline H\cong 2K_2$, $\overline H\cong L_3\sqcup K_2$,
$\overline H\cong T_5$, or $\overline H\cong L_5$, then
$(\overline G\times S^1)\cup \overline H$ embeds in $S^3$.

\item[(2)]
If $\overline H\cong K_2$, $\overline H\cong L_3$, or
$\overline H\cong K_{1,3}$, then
$(\overline G\times S^1)\cup \overline H$ is non-embeddable in $S^3$,
but it is not critical.

\item[(3)]
If $\overline H\cong K_{1,4}$ or $\overline H\cong L_4$, then the
corresponding reduction graph is one of the six graphs
Figure~\ref{seven}(1)--Figure~\ref{seven}(6).
\end{enumerate}
\end{lemma}

\begin{proof}
Label the vertices of $K_5$ by $\{1,2,3,4,5\}$.

\smallskip
\noindent
\textit{(1) Embeddable cases.}
We choose representatives for $\overline H$ and give planar drawings of
$\overline G$ for which the components of $\overline H$ can be placed in the
corresponding complementary face-balls. By the face-placement criterion for
complexes of the form $(\overline G\times S^1)\cup\overline H$, this gives
embeddings in $S^3$.

\smallskip
\noindent
\underline{$\overline H\cong 2K_2$.}
Take $E(\overline H)=\{12,34\}$. Then
$\overline G=K_5-\{12,34\}$ admits a planar embedding with outer cycle
$1-3-2-4-1$, and with the vertex $5$ joined to each of $1,2,3,4$ in the
interior. In this drawing, the pairs $\{1,2\}$ and $\{3,4\}$ lie on the
boundary of the outer face. Hence the two components of $\overline H$ can be
placed disjointly in the corresponding outer face-ball.

\smallskip
\noindent
\underline{$\overline H\cong L_3\sqcup K_2$.}
Take $E(\overline H)=\{12,23,45\}$. Then
$E(\overline G)=\{13,14,15,24,25,34,35\}$. There is a planar drawing of
$\overline G$ in which the edge $12$ can be realized in one complementary
face-ball, the edge $23$ in an adjacent complementary face-ball meeting the
first one along the branch component corresponding to $2$, and the edge
$45$ in a complementary face-ball disjoint from the path component. Thus the
tree component $1-2-3$ and the edge component $45$ of $\overline H$ are
realized without mutual intersections. Hence
$(\overline G\times S^1)\cup\overline H$ embeds in $S^3$.

\smallskip
\noindent
\underline{$\overline H\cong T_5$.}
Take $E(\overline H)=\{13,23,24,35\}$. Then
$E(\overline G)=\{12,14,15,25,34,45\}$. Thus $\overline G$ is the $4$-cycle
$1-2-5-4-1$, together with the diagonal $15$ and the extra edge $34$ attached
at $4$. Draw the $4$-cycle in the plane, draw the diagonal $15$ inside it, and
draw the edge $34$ outside it. Then all five vertices occur on the boundary of
the outer complementary region, whose boundary walk is $1-2-5-4-3-4-1$.
Since $\overline H$ is a tree with all its vertices on this boundary, it can be
placed in the corresponding outer face-ball. Hence
$(\overline G\times S^1)\cup\overline H$ embeds in $S^3$.

\smallskip
\noindent
\underline{$\overline H\cong L_5$.}
Take $E(\overline H)=\{14,24,25,35\}$. Then $\overline G$ is the $5$-cycle
$1-2-3-4-5-1$, together with the diagonal $13$. All five vertices lie on the
boundary of the outer face. Therefore the path $\overline H$ can be placed in
the corresponding outer face-ball, and
$(\overline G\times S^1)\cup\overline H$ embeds in $S^3$.

This proves the embeddability assertions in (1).

\smallskip
\noindent
\textit{(2) Non-embeddable but non-critical cases.}
We treat the three cases $\overline H\cong K_2$, $\overline H\cong L_3$, and
$\overline H\cong K_{1,3}$ uniformly. In each case, after subdividing the
relevant sectors if necessary, the complex contains a proper subcomplex of
$K_4$-type, namely a subcomplex homeomorphic to
$(K_4\times S^1)\cup K_{1,4}$. By the preceding $K_4$-type obstruction, this
subcomplex is non-embeddable in $S^3$. Hence the original complex is
non-embeddable.

It remains to see that the original complex is not critical.

\smallskip
\noindent
\underline{$\overline H\cong K_2$.}
Take $E(\overline H)=\{12\}$. Use the product part on the vertices
$\{2,3,4,5\}$ as $K_4\times S^1$. The edge $12$ of $\overline H$, together
with core arcs chosen in the sectors $13\times S^1$, $14\times S^1$, and
$15\times S^1$, gives a $K_{1,4}$ graph with centre at $1$ and leaves on the
four branch components corresponding to $2,3,4,5$. Thus the original complex
contains a proper non-embeddable $K_4$-type subcomplex.

\smallskip
\noindent
\underline{$\overline H\cong L_3$.}
Take $E(\overline H)=\{12,23\}$. Use the product part on the vertices
$\{1,3,4,5\}$ as $K_4\times S^1$. The two edges $21$ and $23$ of
$\overline H$, together with core arcs in the sectors $24\times S^1$ and
$25\times S^1$, give a $K_{1,4}$ graph with centre at $2$ and leaves
corresponding to $1,3,4,5$. Hence again the original complex contains a proper
non-embeddable $K_4$-type subcomplex.

\smallskip
\noindent
\underline{$\overline H\cong K_{1,3}$.}
Take $E(\overline H)=\{12,13,14\}$. Then
$E(\overline G)=\{15,23,24,25,34,35,45\}$. Use the product part on the
vertices $\{2,3,4,5\}$ as $K_4\times S^1$. The three edges $12,13,14$ of
$\overline H$, together with a core arc in the sector $15\times S^1$, give a
$K_{1,4}$ graph with centre at $1$ and leaves corresponding to $2,3,4,5$.
Thus this case also contains a proper non-embeddable $K_4$-type subcomplex.

In all three cases, choose a simplex $\sigma$ of the second barycentric
subdivision in a small two-dimensional part of one of the sectors not used by
the chosen $K_4$-type subcomplex. Then the open star $st(\sigma)$ is disjoint
from that subcomplex. Consequently $|X''|\setminus st(\sigma)$ still contains
a non-embeddable subcomplex. Therefore
$(\overline G\times S^1)\cup\overline H$ is not critical.

\smallskip
\noindent
\textit{(3) The remaining two cases.}
If $\overline H\cong K_{1,4}$, then $\overline G\cong K_4$, and undoing the
suppressed degree-two vertices gives exactly the two graphs
Figure~\ref{seven}(1) and Figure~\ref{seven}(2).

If $\overline H\cong L_4$, then undoing the suppressed degree-two vertices
gives exactly the four graphs Figure~\ref{seven}(3)--Figure~\ref{seven}(6).

This proves the lemma.
\end{proof}

\begin{lemma}[Finite $K_{3,3}$ check]\label{lem:K33-case-check}
Let the two parts of $K_{3,3}$ be
$A=\{a_1,a_2,a_3\}$ and $B=\{b_1,b_2,b_3\}$, and write $a_i b_j$
for the edge joining $a_i$ to $b_j$.  Let $\overline H$ be a nonempty
forest in $K_{3,3}$, and put $\overline G:=K_{3,3}-E(\overline H)$.
Up to automorphism of $K_{3,3}$, the following hold.

\begin{enumerate}
\item[(1)]
If $\overline H\cong 2K_2$, $\overline H\cong L_4$,
$\overline H\cong L_3\sqcup K_2$, $\overline H\cong 3K_2$,
$\overline H\cong L_5$, $\overline H\cong L_4\sqcup K_2$, or
$\overline H\cong L_6$, then
$(\overline G\times S^1)\cup\overline H$ embeds in $S^3$.

\item[(2)]
If $\overline H\cong K_2$, $\overline H\cong L_3$, or
$\overline H\cong L_3\sqcup L_3$, then
$(\overline G\times S^1)\cup\overline H$ is non-embeddable in $S^3$,
but it is not critical.

\item[(3)]
If $\overline H$ properly contains a $K_{1,3}$ subforest, then
$(\overline G\times S^1)\cup\overline H$ contains a proper
non-embeddable subcomplex, and hence is not critical.

\item[(4)]
If $\overline H\cong K_{1,3}$, then the corresponding reduction graph is
the $K_{2,3}$-type graph shown in Figure~\ref{seven}(7).
\end{enumerate}
\end{lemma}

\begin{proof}
We first enumerate the possible forests.  The automorphism group of
$K_{3,3}$ is generated by permutations of $A$, permutations of $B$, and the
interchange of the two parts.  Hence a nonempty forest is determined, up to
automorphism, by its degree sequence together with its incidence with the
bipartition.  The following representatives exhaust all possibilities.

\begin{center}
\begin{tabular}{c|l}
type & representative for $E(\overline H)$ \\
\hline
$K_2$ & $\{a_1b_1\}$ \\
$L_3$ & $\{a_1b_1,a_1b_2\}$ \\
$2K_2$ & $\{a_1b_1,a_2b_2\}$ \\
$L_4$ & $\{a_1b_1,a_1b_2,a_2b_1\}$ \\
$L_3\sqcup K_2$ & $\{a_1b_1,a_1b_2,a_2b_3\}$ \\
$3K_2$ & $\{a_1b_1,a_2b_2,a_3b_3\}$ \\
$L_5$ & $\{a_1b_1,a_1b_2,a_2b_1,a_2b_3\}$ \\
$L_4\sqcup K_2$ & $\{a_1b_1,a_1b_2,a_2b_1,a_3b_3\}$ \\
$L_3\sqcup L_3$ & $\{a_1b_1,a_1b_2,a_2b_3,a_3b_3\}$ \\
$K_{1,3}$ & $\{a_1b_1,a_1b_2,a_1b_3\}$ \\
$(3,2,1,1,1)$ & $\{a_1b_1,a_1b_2,a_1b_3,a_2b_1\}$ \\
$(3,3,1,1,1,1)$ & $\{a_1b_1,a_1b_2,a_1b_3,a_2b_1,a_3b_1\}$ \\
$(3,2,2,1,1,1)$ & $\{a_1b_1,a_1b_2,a_1b_3,a_2b_1,a_3b_2\}$ \\
$L_6$ & $\{a_1b_1,a_1b_2,a_2b_1,a_2b_3,a_3b_2\}$
\end{tabular}
\end{center}

Here and below, when $\overline G$ is disconnected, ``cofacial'' means lying
on the same boundary component of a complementary region, as required by
Lemma~\ref{lem:region-incidence-puncture}.

\smallskip
\noindent
\textit{(1) Embeddable cases.}
For the seven representatives
$2K_2$, $L_4$, $L_3\sqcup K_2$, $3K_2$, $L_5$,
$L_4\sqcup K_2$, and $L_6$, one checks directly from planar drawings of
$\overline G$ that each component of $\overline H$ has all its attaching
vertices on one boundary component of a complementary region of $\overline G$.
Therefore, by Lemma~\ref{lem:region-incidence-puncture}, the complex
$(\overline G\times S^1)\cup\overline H$ embeds in $S^3$.

For clarity, the representatives used in this finite check are the following:
for $2K_2$, use $\{a_1b_1,a_2b_2\}$; for $L_4$, use
$\{a_1b_1,a_1b_2,a_2b_1\}$; for $L_3\sqcup K_2$, use
$\{a_1b_1,a_1b_2,a_2b_3\}$; for $3K_2$, use
$\{a_1b_1,a_2b_2,a_3b_3\}$; for $L_5$, use
$\{a_1b_1,a_1b_2,a_2b_1,a_2b_3\}$; for $L_4\sqcup K_2$, use
$\{a_1b_1,a_1b_2,a_2b_1,a_3b_3\}$; and for $L_6$, use
$\{a_1b_1,a_1b_2,a_2b_1,a_2b_3,a_3b_2\}$.

\smallskip
\noindent
\textit{(2) Non-embeddable but non-critical cases.}
Consider first $\overline H\cong K_2$.  Take
$E(\overline H)=\{a_1b_1\}$.  If the corresponding complex embedded in
$S^3$, then by Lemma~\ref{lem:region-incidence-puncture} the vertices
$a_1$ and $b_1$ would be cofacial in a planar embedding of
$K_{3,3}-a_1b_1$.  The missing edge $a_1b_1$ could then be inserted in that
region, giving a planar embedding of $K_{3,3}$, a contradiction.  Hence the
complex is non-embeddable.

The same argument applies to $\overline H\cong L_3$.  Taking
$E(\overline H)=\{a_1b_1,a_1b_2\}$, an embedding would force
$a_1,b_1,b_2$ to be cofacial in a planar embedding of
$K_{3,3}-\{a_1b_1,a_1b_2\}$.  Then the path $b_1-a_1-b_2$ could be inserted
in that region, again producing a planar embedding of $K_{3,3}$.

For $\overline H\cong L_3\sqcup L_3$, take
$E(\overline H)=\{a_1b_1,a_1b_2,a_2b_3,a_3b_3\}$.  If the complex embedded,
then the two attaching triples $a_1,b_1,b_2$ and $b_3,a_2,a_3$ would be
realizable in the complementary regions of a planar embedding of
$\overline G$.  The two path components of $\overline H$ could then be
inserted, giving a planar realization of all edges of $K_{3,3}$, again
impossible.  Thus this case is also non-embeddable.

It remains to see that these three cases are not critical.  In each case,
there is a sector whose puncture leaves the same region-incidence obstruction.
A direct finite check gives the following choices.

\begin{center}
\begin{tabular}{c|c|c}
$\overline H$ & representative for $E(\overline H)$ & punctured sector \\
\hline
$K_2$ & $\{a_1b_1\}$ & $a_2b_1$ \\
$L_3$ & $\{a_1b_1,a_1b_2\}$ & $a_2b_2$ \\
$L_3\sqcup L_3$ & $\{a_1b_1,a_1b_2,a_2b_3,a_3b_3\}$ & $a_2b_1$
\end{tabular}
\end{center}

After puncturing the indicated sector, the relevant attaching vertices are
still not cofacial in the sense of Lemma~\ref{lem:region-incidence-puncture}.
Thus the punctured complex remains non-embeddable.  Hence in each of the
three cases there is a simplex $\sigma$ in the second barycentric subdivision
such that $|X''|\setminus st(\sigma)$ is non-embeddable.  Therefore these
complexes are non-embeddable but not critical.

\smallskip
\noindent
\textit{(3) Forests properly containing a $K_{1,3}$ subforest.}
There are three remaining representatives which properly contain the
$K_{1,3}$ subforest $\{a_1b_1,a_1b_2,a_1b_3\}$, namely
$\{a_1b_1,a_1b_2,a_1b_3,a_2b_1\}$,
$\{a_1b_1,a_1b_2,a_1b_3,a_2b_1,a_3b_1\}$, and
$\{a_1b_1,a_1b_2,a_1b_3,a_2b_1,a_3b_2\}$.
The subforest $\{a_1b_1,a_1b_2,a_1b_3\}$ gives the $K_{2,3}$-type
obstruction.  Hence each corresponding complex contains a proper
non-embeddable subcomplex.  Choosing a simplex $\sigma$ whose open star is
disjoint from this proper subcomplex, the space $|X''|\setminus st(\sigma)$
still contains a non-embeddable subcomplex.  Therefore these cases are not
critical.

\smallskip
\noindent
\textit{(4) The $K_{1,3}$ case.}
The only remaining pattern is $\overline H\cong K_{1,3}$.  Taking
$E(\overline H)=\{a_1b_1,a_1b_2,a_1b_3\}$, we have
$\overline G=K_{3,3}-\{a_1b_1,a_1b_2,a_1b_3\}\cong K_{2,3}$.  This is
exactly the reduction of the $K_{2,3}$-type model shown in
Figure~\ref{seven}(7).

This proves the lemma.
\end{proof}

\subsection{The seven model types}

\begin{lemma}[Classification assuming no proper non-planar subgraph]\label{lem:seven-reduction-types}
Let
$X=(G\times S^1)\cup H$
be a critical complex for $S^3$, and let
$\widehat X=G\cup H$
be its reduction.
Assume that $\widehat X$ has no proper non-planar subgraph.
Then $\widehat X$ is homeomorphic to one of the seven graphs shown in Figure~1.

More precisely, exactly two of them belong to the contraction class represented by
Figure~3(i), exactly four belong to the contraction class represented by
Figure~3(ii), and exactly one belongs to the contraction class represented by
Figure~3(iii).
\end{lemma}

\begin{proof}
By Lemma~\ref{forest}, $H$ is a forest.
By Lemma~\ref{lem:reduction-nonplanar}, $\widehat X$ is non-planar.
By Lemma~\ref{lem:G-planar}, $G$ is planar.
Since $\widehat X$ has no proper non-planar subgraph, Lemma~\ref{kuratowski-core}
implies that $\widehat X$ is homeomorphic to a subdivision of either $K_5$ or
$K_{3,3}$.

We treat the two cases separately.

\smallskip
\noindent
\textit{Case 1. $\widehat X$ is homeomorphic to a subdivision of $K_5$.}

Suppress all degree-two vertices of $\widehat X$, and let $\overline H$ be the image
of $H$ in the resulting copy of $K_5$.
Put
$\overline G:=K_5-E(\overline H)$.
Since $H$ is a forest, $\overline H$ is a forest as well.
Since $G$ is planar, we must have $\overline H\neq\emptyset$.

Up to automorphism of $K_5$, there are exactly nine possibilities for the nonempty
forest $\overline H$:
\[
K_2,\ L_3,\ 2K_2,\ K_{1,3},\ L_4,\ L_3\sqcup K_2,\ K_{1,4},\ T_5,\ L_5.
\]

By Lemma~\ref{lem:K5-case-check}, the four types
\[
2K_2,\qquad L_3\sqcup K_2,\qquad T_5,\qquad L_5
\]
give embeddable complexes and hence cannot occur for a critical complex, while the
three types
\[
K_2,\qquad L_3,\qquad K_{1,3}
\]
give non-embeddable but non-critical complexes and hence cannot occur either.
Therefore only
$\overline H\cong K_{1,4}$
or
$\overline H\cong L_4$
remain.
Again by Lemma~\ref{lem:K5-case-check}, undoing the suppressed degree-two vertices
yields exactly the six graphs Figure~\ref{seven}(1)--Figure~\ref{seven}(6).

\smallskip
\noindent
\textit{Case 2. $\widehat X$ is homeomorphic to a subdivision of $K_{3,3}$.}

Suppress all degree-two vertices of $\widehat X$, and let $\overline H$ be the image
of $H$ in the resulting copy of $K_{3,3}$.
Put
$\overline G:=K_{3,3}-E(\overline H)$.
As above, $\overline H$ is a nonempty forest.

By Lemma~\ref{lem:K33-case-check}, the possible forest patterns
$\overline H\subset K_{3,3}$ have been exhausted up to automorphism.
All cases except the $K_{1,3}$ pattern are either embeddable, or contain
a proper non-embeddable subcomplex, or remain non-embeddable after an
elementary puncturing. Hence none of them can occur for a critical complex.
Thus the only remaining possibility is
\[
  \overline H\cong K_{1,3}.
\]
Undoing the suppressed degree-two vertices yields exactly one graph, namely
Figure~\ref{seven}(7).

Combining the two cases, we conclude that $\widehat X$ is homeomorphic to one of
the seven graphs shown in Figure~\ref{seven}. Moreover, two of them belong to the
contraction class represented by Figure~\ref{3types}(i), four belong to the
contraction class represented by Figure~\ref{3types}(ii), and one belongs to the
contraction class represented by Figure~\ref{3types}(iii).
\end{proof}

\begin{lemma}[Finite obstruction lemma]\label{lem:finite-obstruction-new}
Let $X=(G\times S^1)\cup H$ be critical for $S^3$, and let
$\widehat X=G\cup H$ be its reduction. Let
$\Gamma\subsetneq\widehat X$ be an inclusion-minimal non-planar subgraph, and write
$\Gamma=G_\Gamma\cup H_\Gamma$ and
$X_\Gamma=(G_\Gamma\times S^1)\cup H_\Gamma$.
Assume that $X_\Gamma$ embeds in $S^3$, and that there is no proper intermediate
non-embeddable subcomplex
$X_\Gamma\subset Y\subsetneq X$ of the form
$Y=(G_Y\times S^1)\cup H_Y$.
Then $\widehat X$ is one of the seven reduction graphs shown in
Figure~\ref{seven}.
\end{lemma}

\begin{proof}
Since $\Gamma$ is inclusion-minimal non-planar, Lemma~\ref{kuratowski-core} implies
that $\Gamma$ is a subdivision of either $K_5$ or $K_{3,3}$. Suppress the degree-two
vertices in this Kuratowski core. The image of the $H$-part is a forest by
Lemma~\ref{forest}, and the image of the $G$-part is planar by
Lemma~\ref{lem:G-planar}.

The hypothesis that no proper intermediate complex is non-embeddable means that
the first obstruction which appears when the remaining sectors and graph edges are
added to $X_\Gamma$ is the whole complex $X$ itself. By
Lemma~\ref{lem:region-incidence-puncture}, such an obstruction is purely a
face-incidence obstruction for a forest attached to a planar product part.

If the suppressed core is $K_5$, Lemma~\ref{lem:K5-case-check} gives the complete
finite list. The embeddable cases cannot occur; the non-embeddable but
non-critical cases cannot occur because an elementary puncturing leaves a
non-embeddable complex; hence only $\overline H\cong K_{1,4}$ or
$\overline H\cong L_4$ remains. These give exactly the six reductions
Figure~\ref{seven}(1)--(6).

If the suppressed core is $K_{3,3}$, Lemma~\ref{lem:K33-case-check} gives the
corresponding finite list. The only remaining critical candidate is
$\overline H\cong K_{1,3}$, which gives exactly Figure~\ref{seven}(7).
Therefore $\widehat X$ is one of the seven reductions.
\end{proof}

\begin{proposition}[$\widehat X$ has no proper non-planar subgraph]\label{prop:no-proper-nonplanar-new}
If $X=(G\times S^1)\cup H$ is critical for $S^3$, then the reduction graph
$\widehat X=G\cup H$ has no proper non-planar subgraph.
\end{proposition}

\begin{proof}
Assume that $\widehat X$ has a proper non-planar subgraph. Choose an
inclusion-minimal non-planar subgraph $\Gamma\subsetneq\widehat X$ and set
$X_\Gamma=(G_\Gamma\times S^1)\cup H_\Gamma$.
Since isolated vertices do not affect planarity, we may choose $\Gamma$ so that
at least one edge of $\widehat X$ is not contained in $\Gamma$.

If $X_\Gamma$ is non-embeddable in $S^3$, then Lemma~\ref{lem:choose-simplex-away}
provides a simplex $\sigma$ of $X''$ such that
$|X_\Gamma|\subset |X''|\setminus st(\sigma)$. This contradicts the criticality of
$X$. Hence $X_\Gamma$ embeds in $S^3$.

If there were a proper intermediate non-embeddable subcomplex
$X_\Gamma\subset Y\subsetneq X$ of the form $(G_Y\times S^1)\cup H_Y$, then the
same lemma would place $|Y|$ inside $|X''|\setminus st(\sigma)$ for a suitable
simplex $\sigma$, again contradicting criticality. Hence no such intermediate
subcomplex exists.

By Lemma~\ref{lem:finite-obstruction-new}, the reduction graph $\widehat X$ is one
of the seven graphs in Figure~\ref{seven}. Each of these seven graphs is an
inclusion-minimal non-planar graph, being a subdivision of $K_5$ or $K_{3,3}$.
This contradicts the assumption that $\Gamma$ was a proper non-planar subgraph of
$\widehat X$. Therefore $\widehat X$ has no proper non-planar subgraph.
\end{proof}

In the following, we record a convenient reformulation of the seven explicit
critical complexes of Figure~\ref{seven}.
Motivated by the families constructed in Section~3 and by Theorem~\ref{product},
we group these seven complexes into three combinatorial types.

Throughout this subsection, we assume that every sector of $M$ is orientable, that
every attaching map has degree one, and that $M$ contains no non-orientable
sector.

\begin{definition}
A complex $X=M\cup G$ is said to be of \emph{$K_4$-type}, \emph{$\Theta_4$-type},
or \emph{$K_{2,3}$-type} if it satisfies one of the following conditions.
\end{definition}

\begin{enumerate}[(I)]
\item \textbf{$K_4$-type.}
The branch set $B(M)$ consists of four components
$B_1,B_2,B_3,B_4,$
and the sector set consists of six annuli
$S_{ij}\qquad (1\le i<j\le 4),$
with
$\partial S_{ij}=B_i\cup B_j .$
Thus the incidence pattern of branches and sectors is that of the edges and
vertices of $K_4$.
Moreover, $M$ embeds in $S^3$ so that its complement consists of four regions
$R_1,R_2,R_3,R_4,$
where
\[
\partial R_k=\bigcup_{\substack{1\le i<j\le 4\\ i,j\ne k}} S_{ij}.
\]

The graph $G$ is either a copy of $K_{1,4}$ or an H-shaped tree, and its four
degree-one vertices are attached to points of $B_1,B_2,B_3,B_4$, respectively.

Items (1) and (2) of Figure~\ref{seven} are of this type.

\begin{figure}[htbp]
	\begin{center}
	\includegraphics[trim=0mm 0mm 0mm 0mm, width=.4\linewidth]{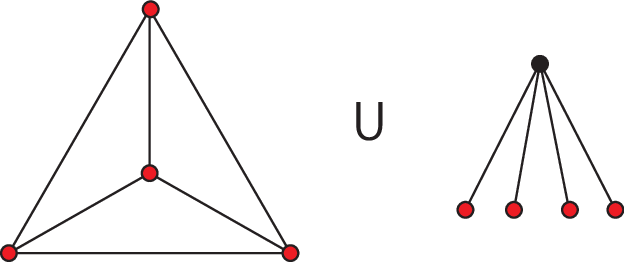}
	\end{center}
	\caption{$K_4$-type.}
	\label{K4-type}
\end{figure}

\item \textbf{$\Theta_4$-type.}
The branch set $B$ consists of two components
and the sector set consists of four sectors
$S_0,S_1,S_2,S_3,$
each satisfying
$\partial S_i=B.$
The multibranched surface $M$ embeds in $S^3$ so that its complement consists of
four regions
$R_1,R_2,R_3,R_4,$
with
\[
\partial R_1=S_0\cup S_1,\quad
\partial R_2=S_1\cup S_2,\quad
\partial R_3=S_2\cup S_3,\quad
\partial R_4=S_3\cup S_0.
\]
Equivalently, the rotation system of $M$ agrees with that of $\Theta_4$.

The graph $G$ consists of three additional edges attached to the sectors of $M$
in one of the four attachment patterns represented by
Figure~\ref{seven}(3), Figure~\ref{seven}(4), Figure~\ref{seven}(5), and
Figure~\ref{seven}(6).

Items (3), (4), (5), and (6) of Figure~\ref{seven} are of this type.

\begin{figure}[htbp]
	\begin{center}
	\includegraphics[trim=0mm 0mm 0mm 0mm, width=.4\linewidth]{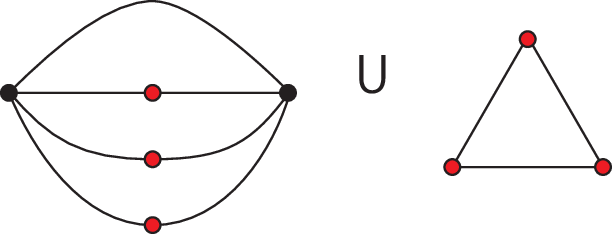}
	\end{center}
	\caption{$\Theta_4$-type.}
	\label{Theta4-type}
\end{figure}

\item \textbf{$K_{2,3}$-type.}
The branch set $B$ consists of two components
and the sector set consists of three sectors
$S_1,S_2,S_3,$
each satisfying
$\partial S_i=B.$
The multibranched surface $M$ embeds in $S^3$ so that its complement consists of
three regions
$R_1,R_2,R_3,$
with
\[
\partial R_1=S_1\cup S_2,\quad
\partial R_2=S_2\cup S_3,\quad
\partial R_3=S_3\cup S_1.
\]

The graph $G$ is isomorphic to $K_{1,3}$, and its three leaves are attached to
points in $\operatorname{int}S_1$, $\operatorname{int}S_2$, and
$\operatorname{int}S_3$, respectively.

Item (7) of Figure~\ref{seven} is of this type.

\begin{figure}[htbp]
	\begin{center}
	\includegraphics[trim=0mm 0mm 0mm 0mm, width=.4\linewidth]{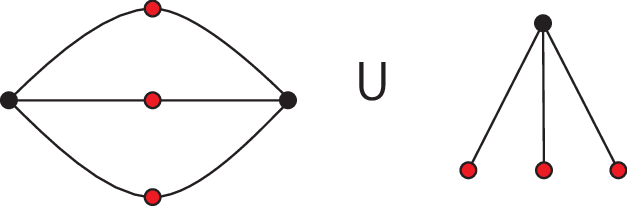}
	\end{center}
	\caption{$K_{2,3}$-type.}
	\label{K23-type}
\end{figure}
\end{enumerate}

The next proposition shows that each of these model types is indeed critical for
$S^3$.

\begin{proposition}\label{three-types-critical}
Every complex of $K_4$-type, $\Theta_4$-type, or $K_{2,3}$-type is critical for
$S^3$.
\end{proposition}

\begin{proof}
We first prove non-embeddability.

\medskip
\noindent
\textit{$K_4$-type.}
Let $X=M\cup G$ be of $K_4$-type.
Remove from $G$ the four attaching vertices on the branch components
$B_1,B_2,B_3,B_4$.
The remaining graph, denoted by $G^\circ$, is connected; indeed, $G$ is either a
copy of $K_{1,4}$ or an H-shaped tree.
Suppose that $X$ embeds in $S^3$.

Since $G^\circ$ is connected and disjoint from $M$, its image must lie in the
closure of a single complementary region of $M$.
By definition, the complement of $M$ consists of four regions
$R_1,R_2,R_3,R_4,$
where $R_k$ is incident exactly to the three branch components
$B_i$ with $i\ne k$.
In particular, no complementary region is incident to all four branch components.

On the other hand, the closure of $G^\circ$ in $X$ has four ends attached
respectively to $B_1,B_2,B_3,B_4$.
Hence the complementary region containing $G^\circ$ would have to be incident to
all four branch components, which is impossible.
Therefore $X$ is not embeddable in $S^3$.

\medskip
\noindent
\textit{$\Theta_4$-type.}
Let $X=M\cup G$ be of $\Theta_4$-type.
By definition, $M$ has four sectors
$S_0,S_1,S_2,S_3$
and four complementary regions
$R_1,R_2,R_3,R_4$
with
\[
\partial R_1=S_0\cup S_1,\quad
\partial R_2=S_1\cup S_2,\quad
\partial R_3=S_2\cup S_3,\quad
\partial R_4=S_3\cup S_0.
\]
Thus the closure of each complementary region is incident to exactly one adjacent
pair of sectors.

Suppose that $X$ embeds in $S^3$.
Then each connected component of $G\setminus (G\cap M)$ is contained in the closure
of a single complementary region of $M$.
Hence, if $C\subset G$ is any connected subgraph, then the set
$A(C):=\{\,S_i \mid C\cap S_i\neq \emptyset\,\}$
must be contained in one of the four adjacent pairs
$\{S_0,S_1\},\ \{S_1,S_2\},\ \{S_2,S_3\},\ \{S_3,S_0\}.$
We now check that this fails in each of the four models of
Figure~\ref{seven}(3)--(6).

\smallskip
\noindent
For Figure~\ref{seven}(3), let $C_3$ be the connected subgraph consisting of the
three red edges.  Then $C_3$ meets the three sectors $S_0,S_1,S_2$, so
$A(C_3)=\{S_0,S_1,S_2\}.$
This set is not contained in any adjacent pair, contradiction.

\smallskip
\noindent
For Figure~\ref{seven}(4), let $C_4$ be the connected red path with four attaching
vertices.  It meets all four sectors, and hence
$A(C_4)=\{S_0,S_1,S_2,S_3\}.$
Again this is not contained in any adjacent pair, contradiction.

\smallskip
\noindent
For Figure~\ref{seven}(5), let $C_5$ be the middle red edge.  Its two endpoints lie
on opposite sectors, say $S_0$ and $S_2$.  Therefore
$A(C_5)=\{S_0,S_2\},$
which is not contained in any adjacent pair, contradiction.

\smallskip
\noindent
For Figure~\ref{seven}(6), let $C_6$ be the horizontal red edge.  Its two endpoints
also lie on opposite sectors, say $S_0$ and $S_2$.  Thus
$A(C_6)=\{S_0,S_2\},$
which is not contained in any adjacent pair, contradiction.

Therefore none of the four $\Theta_4$-type complexes can be embedded in $S^3$.

\medskip
\noindent
\textit{$K_{2,3}$-type.}
Let $X=M\cup G$ be of $K_{2,3}$-type.
Remove from $G$ the three attaching vertices on
$\operatorname{int}S_1$, $\operatorname{int}S_2$, and $\operatorname{int}S_3$.
The remaining graph $G^\circ$ is connected, since $G\cong K_{1,3}$.

Suppose that $X$ embeds in $S^3$.
Since $G^\circ$ is connected and disjoint from $M$, its image must lie in the
closure of a single complementary region of $M$.
By definition, the complement of $M$ consists of three regions
$R_1,R_2,R_3$
with
\[
\partial R_1=S_1\cup S_2,\quad
\partial R_2=S_2\cup S_3,\quad
\partial R_3=S_3\cup S_1.
\]
Thus each complementary region is incident to exactly two of the three sectors.

However, the closure of $G^\circ$ in $X$ has three ends attached respectively to
$\operatorname{int}S_1$, $\operatorname{int}S_2$, and $\operatorname{int}S_3$.
Hence the complementary region containing $G^\circ$ would have to be incident to
all three sectors, which is impossible.
Therefore $X$ is not embeddable in $S^3$.

\medskip
We next verify minimality.
Let $\sigma$ be a simplex of the second barycentric subdivision $X''$.
By Remark~\ref{rem:criticality}, up to PL homeomorphism, deleting $st(\sigma)$ is a
local elementary deletion.
Accordingly, we distinguish four cases.

\medskip
\noindent
\textit{Case 1. $st(\sigma)$ is contained in the interior of a sector of $M$.}

For a $K_4$-type complex, deleting a small open disk from a sector
$S_{ij}$ merges the two complementary regions on opposite sides of $S_{ij}$.
If $\{i,j,k,\ell\}=\{1,2,3,4\}$, these are precisely the regions $R_k$ and
$R_\ell$.
Their union is incident to all four branch components
$B_1,B_2,B_3,B_4$.
Hence the whole graph part can be embedded in this merged region, so the resulting
space embeds in $S^3$.

For a $\Theta_4$-type complex, suppose that the deleted disk lies in a sector
$S_i$ (indices taken modulo $4$). Then the two complementary regions adjacent to
$S_i$ merge into a single region $Q$, while the other two complementary regions
remain unchanged. The region $Q$ is incident to the three consecutive sectors
$S_{i-1}, S_i, S_{i+1}$. Since the graph part consists of three edges attached in one
of the four patterns of Figure~\ref{seven}(3)--(6), the edge or edges incident to $S_i$ can be
placed in $Q$, and every remaining edge can be placed in the unchanged
complementary region incident to its two attaching sectors. Hence the graph part
can be embedded in the complement of the modified surface, so the resulting space
embeds in $S^3$.

For a $K_{2,3}$-type complex, deleting a small open disk from one of the sectors
merges the two complementary regions adjacent to that sector.
The merged region is incident to all three sectors
$S_1,S_2,S_3$, and therefore the whole graph $K_{1,3}$ can be embedded there.
Thus the resulting space embeds in $S^3$.

\medskip
\noindent
\textit{Case 2. $st(\sigma)$ meets a branch component of $M$.}

For a $K_4$-type complex, deleting a small neighborhood on a branch component
$B_i$ opens that branch and makes the complementary regions adjacent to $B_i$
communicate through the deleted neighborhood.
The resulting complement contains a region incident to all four branch components,
so the graph part can be embedded there.
Hence the resulting space embeds in $S^3$.

For a $\Theta_4$-type complex, the branch set has two components.
Deleting a small neighborhood on either branch component opens the cyclic arrangement of the four
regions around that branch component.
In each of the four explicit models, the graph part can then be embedded in the
resulting connected region.
Thus the resulting space embeds in $S^3$.

For a $K_{2,3}$-type complex, the branch set has two components.
Deleting a small neighborhood on either branch component makes the three complementary regions communicate through the
deleted neighborhood.
The resulting complement contains a region incident to all three sectors, so the
graph part can be embedded there.
Hence the resulting space embeds in $S^3$.

\medskip
\noindent
\textit{Case 3. $st(\sigma)$ is contained in the graph part $G$ away from the
attaching points.}

For a $K_4$-type complex, such a deletion removes a nonempty open subarc from
the graph part. Each connected component of the remaining graph is attached to a
proper subset of the four branch components $B_1,B_2,B_3,B_4$, and hence to at
most three of them. Since, for every three-element subset of
$\{B_1,B_2,B_3,B_4\}$, there is a complementary region of $M$ incident exactly to
those three branch components, each component of the remaining graph can be
embedded in a suitable complementary region. Thus the resulting space embeds in
$S^3$.

For a $\Theta_4$-type complex, such a deletion removes a nonempty open subarc
from one of the three graph edges. Hence the remaining graph part is a forest whose
connected components are attached to at most two consecutive sectors. Since each
complementary region of $M$ is incident to exactly two consecutive sectors, each
component can be embedded in a suitable complementary region. Thus the resulting
space embeds in $S^3$.

For a $K_{2,3}$-type complex, such a deletion removes a nonempty open subarc from
$K_{1,3}$.
Every connected component of the remaining graph is attached to at most two of the
three sectors.
Since each complementary region of $M$ is incident to exactly two sectors, each
component can be embedded in a suitable complementary region.
Thus the resulting space embeds in $S^3$.

\medskip
\noindent
\textit{Case 4. $st(\sigma)$ contains an attaching point of $G$ on $M$.}

For a $K_4$-type complex, deleting such a neighborhood removes one of the four
incidences of the graph part with the branch components. Consequently each
connected component of the remaining graph is attached to at most three of
$B_1,B_2,B_3,B_4$. As above, any such subset is incident to some complementary
region of $M$, and so each component can be embedded in a suitable complementary
region. Hence the resulting space embeds in $S^3$.

For a $\Theta_4$-type complex, deleting such a neighborhood removes one
incidence between the graph part and the sector set. Consequently each connected
component of the remaining graph part is attached to at most two consecutive
sectors. Since each complementary region of $M$ is incident to exactly two
consecutive sectors, each component can be embedded in a suitable complementary
region. Hence the resulting space embeds in $S^3$.

For a $K_{2,3}$-type complex, deleting such a neighborhood removes one of the
three incidences with the sectors.
The remaining graph is attached to at most two sectors, hence it embeds in a
suitable complementary region of $M$.

Therefore in every case the space $|X''|\setminus st(\sigma)$ embeds in $S^3$.
Since $X$ itself is not embeddable in $S^3$, every complex of $K_4$-type,
$\Theta_4$-type, or $K_{2,3}$-type is critical for $S^3$.
\end{proof}

\subsection{Proof of the classification theorem}

\begin{theorem}[Classification theorem]\label{product}
Let \(X\) be a critical complex for \(S^3\) of the form
\(X=(G\times S^1)\cup H\), where \(G\) and \(H\) are graphs.
Then \(|X|\) is homeomorphic to one of the seven complexes shown in
Figure~\ref{seven}. Conversely, each of the seven complexes in
Figure~\ref{seven} is critical for \(S^3\).
\end{theorem}

Equivalently, $X$ belongs to exactly one of the following three families:
\begin{enumerate}
\item[(i)] \textbf{$K_4$-type:}
      $G\cong K_4$, and $H$ is either $K_{1,4}$ or an H-shaped tree (that is, the tree obtained by joining the centers of two copies of $K_{1,3}$ by a single edge),
with the degree-one vertices of $H$ attached to branch circles of $G\times S^1$;
\item[(ii)] \textbf{$\Theta_4$-type:}
the multibranched surface part is of $\Theta_4$-type, and $H$ consists of three
additional edges such that the reduction graph belongs to the contraction class
represented by Figure~\ref{3types}(ii);
\item[(iii)] \textbf{$K_{2,3}$-type:}
      $G\cong K_{2,3}$, and $H\cong K_{1,3}$, with the three leaves attached to
      three distinct sectors of $G\times S^1$.
\end{enumerate}
Conversely, each of the seven complexes in Figure~\ref{seven} belongs to one of
the three model types formalized above, and
Proposition~\ref{three-types-critical} shows that each of them is critical for
$S^3$.
Hence the list is complete.

\begin{proof}
Let $\widehat X=G\cup H$ be the reduction of $X$.
By Proposition~\ref{prop:no-proper-nonplanar-new}, $\widehat X$ has no proper non-planar subgraph.
Hence, by Lemma~\ref{lem:seven-reduction-types}, $\widehat X$ is homeomorphic to one of the
seven graphs shown in Figure~\ref{seven}.

We now recover $X$ from $\widehat X$ by thickening each edge of the $G$-part to an
annulus, while keeping the $H$-part $1$-dimensional.

\medskip
\noindent
\textit{Case 1.}
Suppose that $\widehat X$ belongs to the contraction class represented by
Figure~\ref{3types}(i).
Then the $G$-part is homeomorphic to $K_4$, and the $H$-part is a tree with four
terminal vertices attached to the four vertices of the $K_4$-part.
After thickening the $G$-part, we obtain $K_4\times S^1$, and the four terminal
vertices of $H$ attach to the four branch circles.
Up to homeomorphism, there are exactly two possibilities for the forest part:
either $H\cong K_{1,4}$ or $H$ is an H-shaped tree.
These are precisely the two complexes shown in Figure~\ref{seven}(1) and
Figure~\ref{seven}(2).
Hence $X$ is of $K_4$-type.

\medskip
\noindent
\textit{Case 2.}
Suppose that \(\widehat X\) belongs to the contraction class
represented by Figure~\ref{3types}(ii).
Then the \(G\)-part is homeomorphic to the \(\Theta_4\)-graph,
that is, the graph consisting of two vertices joined by four edges,
and the \(H\)-part consists of three additional edges attached as in
Figure~\ref{3types}(ii).
After thickening the \(G\)-part, we obtain the multibranched surface
of \(\Theta_4\)-type.
Up to homeomorphism, there are exactly four possible attachment
patterns, namely the four complexes shown in
Figure~\ref{seven}(3)--(6).
Hence \(X\) is of \(\Theta_4\)-type.

\medskip
\noindent
\textit{Case 3.}
Suppose that $\widehat X$ belongs to the contraction class represented by
Figure~\ref{3types}(iii).
Then the $G$-part is homeomorphic to $K_{2,3}$ and the $H$-part is homeomorphic
to $K_{1,3}$.
After thickening the $G$-part, we obtain $K_{2,3}\times S^1$, and the three
leaves of the $K_{1,3}$-part attach to three distinct sectors.
Up to homeomorphism, this yields a unique complex, namely the one shown in
Figure~\ref{seven}(7).
Hence $X$ is of $K_{2,3}$-type.

\medskip
Therefore $|X|$ is homeomorphic to one of the seven complexes shown in
Figure~\ref{seven}.
Equivalently, $X$ belongs to exactly one of the three families listed in the
statement.

Conversely, each of the seven complexes in Figure~\ref{seven} belongs to one of
the three families above, and Proposition~\ref{three-types-critical} shows that
each of them is critical for $S^3$.
Hence the list is complete.
\end{proof}

\bigskip

\noindent{\bf Declaration of competing interest.}

The authors declare that they have no competing interests.

\bigskip

\noindent{\bf Declaration of generative AI and AI-assisted technologies in the manuscript preparation process.}

During the preparation of this work the second author used ChatGPT (OpenAI) for
assistance with language editing, formatting, and consistency checking. After
using this tool, the authors reviewed and edited the content as needed and take
full responsibility for the content of the article.

\bigskip

\noindent{\bf Acknowledgement.}

This work was carried out while the second author was visiting the Instituto de Matem\'{a}ticas, Universidad Nacional Aut\'onoma de M\'exico, from April 2022 to March 2023.
The second author would like to express his sincere gratitude to the first author for his hospitality.
The authors would also like to thank Tatsuya Tsukamoto for helpful comments.

\bibliographystyle{amsplain}

\end{document}